\theoremstyle{plain}
\newtheorem{thm}{Theorem}[section]
\newtheorem{lem}[thm]{Lemma}
\newtheorem{rem}[thm]{Remark}
\newtheorem{defin}[thm]{Definition}
\newtheorem{pro}[thm]{Proposition}
\newcommand{\RR}{{\mathbb R}}
\newcommand{\ep}{\varepsilon}
\newcommand{\tr}{{\rm tr}}
\numberwithin{equation}{section}
\begin{document}
\newcounter{aaa}
\newcounter{bbb}
\newcounter{ccc}
\newcounter{ddd}
\newcounter{eee}
\newcounter{xxx}
\newcounter{xvi}
\newcounter{x}
\setcounter{aaa}{1}
\setcounter{bbb}{2}
\setcounter{ccc}{3}
\setcounter{ddd}{4}
\setcounter{eee}{32}
\setcounter{xxx}{10}
\setcounter{xvi}{16}
\setcounter{x}{38}
\title
{Wong-Zakai approximation of solutions to reflecting stochastic differential
equations on domains in Euclidean spaces\footnote{A modified version will appear in
Stochastic Processes and their Applications, DOI: 10.1016/j.spa.2013.05.004}}
\author{Shigeki Aida\footnote{
This research was partially supported by
Grant-in-Aid for Scientific Research (B) No.~24340023.}
and Kosuke Sasaki\\
Mathematical Institute\\
Tohoku University,
Sendai, 980-8578, JAPAN\\
e-mail: aida@math.tohoku.ac.jp}
\date{}
\maketitle
\begin{abstract}
In this paper, we study the Wong-Zakai approximation
of the solution to the stochastic differential equation on a
domain $D$ in a Euclidean space with normal
reflection at the boundary. 
We prove the $L^p$ convergence of the approximation in
$C([0,T]\to \bar{D})$ under some general conditions on $D$.
\end{abstract}

\section{Introduction}
Stochastic differential equations(SDEs) are defined as 
stochastic integral equations.
The definition of the stochastic integrals is based on martingale theory 
although there are pathwise approaches to this problems
via rough path theory recently.
A simple relation between SDE and
usual ordinal differential equation(=ODE)
were found by Wong and Zakai~\cite{wong-zakai}.
That is, they consider Stratonovich SDE
and corresponding ODE which is obtained by
replacing the Brownian motion by the piecewise linear approximation
and prove that the solution of the ODE converges to the solution of
the Stratonovich SDE almost surely in the topology of uniform
convergence when the approximation becomes finer.
More general approximations of paths are found, {\it e.g.},
in \cite{ikeda-watanabe}.
When we consider SDE on a domain $D$ in $\RR^d$,
we need to consider boundary conditions.
In this paper, we study Wong-Zakai approximations of solutions 
to SDE with reflecting boundary conditions on $\bar{D}$ and 
prove the $L^p$ convergence of them to the solution in $C([0,T]\to \bar{D})$.
This is not a first study of Wong-Zakai approximation of
reflecting SDE.
Doss and Priouret~\cite{doss} proved the uniform convergence of
the Wong-Zakai approximations in probability 
in the case where $\partial D$ is sufficiently smooth.
Also Pettersson~\cite{pettersson} proved the almost sure convergence
in the case where $D$ is a convex domain with the property (B) in
Tanaka~\cite{tanaka} and the diffusion
coefficient is a constant matrix.
This result was improved by 
Ren and Xu~\cite{ren-xu1, ren-xu2}.
They studied Stroock-Varadhan's type support theorem for 
stochastic variational inequalities and showed the convergence in
probability in $C([0,T]\to \bar{D})$.
This result corresponds to the case of reflecting SDEs on convex
domains.
Actually the existence and uniqueness of 
the solutions were proved by
Lions and Sznitman~\cite{lions-sznitman} and Saisho~\cite{saisho}
for more general domains.
In such cases,
Evans and Stroock~\cite{evans-stroock} proved the weak convergence of
the law of the Wong-Zakai approximations.
Our results improve their weak convergence to $L^p$ convergence
in $C([0,T]\to \bar{D})$.
We note that there are studies of Euler and Euler-Peano approximations
of reflecting SDE.
We refer them to the papers \cite{slominski1, slominski2}
by S\l omi\'nski.

The paper is organized as follows.
In Section 2, we 
state our main theorem (Theorem~\ref{Main theorem}).
First, we recall the basic results on
the Skorohod problems and the existence and uniqueness of
the strong solutions of reflecting SDE
based on \cite{lions-sznitman}
and \cite{saisho}.
In particular, we explain the conditions
on domains under which we will work.
In Section 3, we prove $L^p$ convergence of
Euler-Peano approximations.
In Section 4, we prove our main theorem
by estimating the difference between the Euler-Peano 
and Wong-Zakai approximations.

\section{Preliminary and main theorem}
Let $D$ be a non-empty open connected set in $\RR^d$.
In this paper, we do not assume the boundedness of
the boundary of $D$ or $D$ itself.
We define the set ${\cal N}_x$ 
of inward unit normal vectors at the boundary
point $x\in \partial D$ by
\begin{align}
 {\cal N}_x&=\cup_{r>0}{\cal N}_{x,r}\\
{\cal N}_{x,r}&=\left\{{\mathbf n}\in \RR^d~|~|{\mathbf n}|=1,
 B(x-r{\mathbf n},r)\cap D=\emptyset\right\},
\end{align}
where $B(z,r)=\{y\in \RR^d~|~|y-z|<r\}$, $z\in \RR^d$, $r>0$.
In this paper, the function space $C^k_b$ denotes
a set of
$k$-times continuously differentiable functions
such that all their derivatives and themselves are
bounded.
Let us recall conditions (A), (B), (C)
following \cite{saisho}.

\begin{defin}
$(1)$~
Condition {\rm (A)} $(\mbox{uniform exterior sphere condition})$.
 There exists a constant
$r_0>0$ such that
\begin{align}
{\cal N}_x={\cal N}_{x,r_0}\ne \emptyset \quad \mbox{for any}~x\in
 \partial D.
\end{align}

\noindent
$(2)$~
Condition {\rm (B)}.
There exist constants $\delta>0$ and $\beta\ge 1$
satisfying:

for any $x\in\partial D$ there exists a unit vector $l_x$ such that
\begin{align}
 (l_x,{\mathbf n})\ge \frac{1}{\beta}
\qquad \mbox{for any}~{\mathbf n}\in 
\cup_{y\in B(x,\delta)\cap \partial D}{\cal N}_y.
\end{align}

\noindent
$(3)$~
Condition {\rm (C)}.\quad
There exists a $C^2_b$ function $f$ on $\RR^d$ 
and a positive constant $\gamma$ such that 
for any $x\in \partial D$, $y\in \bar{D}$, ${\mathbf n}\in {\cal N}_x$
it holds that
\begin{align}
 \left(y-x,{\mathbf n}\right)+\frac{1}{\gamma}\left((D
 f)(x),{\mathbf n}\right)|y-x|^2\ge 0.
\end{align}
\end{defin}

Note that if $D$ is a convex domain, the condition (A) holds for any
$r_0$ and the condition (C) holds
for $f\equiv 0$.
The admissibility condition on $D$ in \cite{lions-sznitman}
is the property that $D$ 
can be approximated by domains with smooth boundary in a certain
sense.
In this paper, we do not use such a property and we refer it
to \cite{lions-sznitman}.
Here we explain what 
Skorohod problem is.
Let
$w=w(t)$~$(0\le t\le T)$ be a continuous path on $\RR^d$
with $w(0)\in \bar{D}$.
The pair of paths $(\xi,\phi)$ on $\RR^d$
is a solution of a Skorohod problem
associated with $w$ if the following properties
hold.
\begin{itemize}
 \item[(i)] $\xi=\xi(t)$~$(0\le t\le T)$ is a continuous path 
in $\bar{D}$ with $\xi(0)=w(0)$.
\item[(ii)] It holds that $\xi(t)=w(t)+\phi(t)$
for all $0\le t\le T$.
\item[(iii)] $\phi=\phi(t)$~$(0\le t\le T)$ is a continuous bounded variation 
path on $\RR^d$ such that $\phi(0)=0$ and
\begin{align}
\phi(t)&=\int_0^t{\mathbf n}(s)d\|\phi\|_{[0,s]}\\
\|\phi\|_{[0,t]}&=\int_0^t1_{\partial D}(\xi(s))d\|\phi\|_{[0,s]}.
\end{align}
where ${\mathbf n}(t)\in {\cal N}_{\xi(t)}$ if 
$\xi(t)\in \partial D$.
\end{itemize}
In the above,
$\|\phi\|_{[0,t]}$ stands for the total variation norm
of $\phi$.
See (\ref{total variation}).

The existence and uniqueness of solutions were proved by
Tanaka~\cite{tanaka} for the convex domain with additional assumptions.
Lions and Sznitman proved the existence and uniqueness under
conditions (A), (B) and the admissibility of $D$.
This was proved without the addmissibility condition by 
Saisho\cite{saisho} as follows.

\begin{thm}\label{Existence of Skorohod problem}
 Assume conditions {\rm (A)} and {\rm (B)}.
Then there exists a unique solution to the Skorohod 
problem for any continuous path $w$.
Moreover the mapping $\Gamma : w\mapsto \xi$ is
continuous in the uniform convergence topology.
\end{thm}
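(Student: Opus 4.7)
All three assertions rest on one geometric consequence of condition (A), with condition (B) entering only through an a priori bound on $\|\phi\|_{[0,T]}$. Since $B(x-r_0{\mathbf n},r_0)\cap D=\emptyset$, expanding $|y-(x-r_0{\mathbf n})|^2\ge r_0^2$ for $y\in\bar{D}$ yields the \emph{tangency inequality}
\begin{equation*}
(y-x,{\mathbf n})\ge -\frac{|y-x|^2}{2r_0}\qquad (x\in\partial D,\ {\mathbf n}\in{\cal N}_x,\ y\in\bar{D}).
\end{equation*}

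\emph{Uniqueness and the continuity estimate.} Let $(\xi_i,\phi_i)$ solve the Skorohod problem for continuous paths $w_i$ ($i=1,2$) and put $\eta=\xi_1-\xi_2$. Writing $\eta(t)=\eta(0)+(w_1-w_2)(t)-(w_1-w_2)(0)+(\phi_1-\phi_2)(t)$ and integrating by parts (valid because each $\phi_i$ is continuous and of bounded variation), I would split $|\eta(t)|^2$ into a $w$-term bounded by $C_1(1+\|\phi_1\|_{[0,t]}+\|\phi_2\|_{[0,t]})\sup_{0\le s\le t}|(w_1-w_2)(s)|$ plus the boundary contribution $2\int_0^t(\eta(s),d\phi_1(s))-2\int_0^t(\eta(s),d\phi_2(s))$. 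Applying the tangency inequality pointwise with ${\mathbf n}={\mathbf n}_i(s)$, $x=\xi_i(s)$, $y=\xi_j(s)$ bounds the latter by $\frac{1}{r_0}\int_0^t|\eta(s)|^2\,d(\|\phi_1\|_{[0,s]}+\|\phi_2\|_{[0,s]})$. Gronwall with respect to the measure $d(\|\phi_1\|+\|\phi_2\|)$ then yields an estimate of the form
\begin{equation*}
\sup_{0\le t\le T}|\xi_1(t)-\xi_2(t)|^2\le C_2\sup_{0\le t\le T}|(w_1-w_2)(t)|\cdot\exp\!\left(\frac{\|\phi_1\|_{[0,T]}+\|\phi_2\|_{[0,T]}}{r_0}\right),
\end{equation*}
where $C_2$ depends polynomially on $\|\phi_1\|_{[0,T]}+\|\phi_2\|_{[0,T]}$. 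Setting $w_1=w_2$ gives uniqueness.

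\emph{Variation bound from (B) and existence.} To complete the continuity argument, and to run an existence proof, I would use condition (B) to establish a bound $\|\phi\|_{[0,T]}\le G(w)$ depending only on the modulus of continuity of $w$. The strategy is to build a partition $0=\tau_0<\cdots<\tau_N=T$ on which $\Osc(\xi;[\tau_k,\tau_{k+1}])<\delta/2$; then on each subinterval where $\xi$ meets $\partial D$, the ball $B(x,\delta)$ centred at a visited boundary point $x$ contains the range of $\xi$, so $(l_x,{\mathbf n}(t))\ge 1/\beta$ holds throughout, and pairing
\begin{equation*}
\phi(t)-\phi(\tau_k)=\int_{\tau_k}^t{\mathbf n}(r)\,d\|\phi\|_{[0,r]}
\end{equation*}
against $l_x$ yields $\|\phi\|_{[\tau_k,\tau_{k+1}]}\le\beta\bigl(\Osc(\xi;[\tau_k,\tau_{k+1}])+\Osc(w;[\tau_k,\tau_{k+1}])\bigr)$; a Saisho-type induction sums these local bounds into the global one. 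For existence I would set up a projection scheme: for a mesh $\Delta$, on each subinterval $[t_k^\Delta,t_{k+1}^\Delta]$ let $\xi_\Delta$ evolve as the translate of $w$ while it stays in $\bar{D}$ and, whenever it exits, project onto the nearest boundary point along an inward unit normal (well defined in the $r_0$-tube by (A)); set $\phi_\Delta=\xi_\Delta-w$. The variation estimate carries over uniformly in $\Delta$, equicontinuity of $\{\xi_\Delta\}$ follows, and Arzel\`a--Ascoli together with Helly selection produce a subsequential limit $(\xi,\phi)$ that one verifies satisfies the three Skorohod conditions.

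\emph{Main obstacle.} The truly delicate point is the variation bound under (B): the partition $\{\tau_k\}$ must be chosen in advance from $w$, yet it has to control oscillations of $\xi=w+\phi$, an object that itself depends on the quantity $\|\phi\|$ we are trying to bound. This self-referential aspect is why an induction (as in Saisho's original paper) is needed. Once it is settled, continuity of $\Gamma$ in the uniform topology is immediate from the Gronwall estimate: uniform convergence $w_n\to w$ forces a common modulus of continuity on $\{w_n\}$, hence a uniform bound on $\|\phi_n\|_{[0,T]}$, and the estimate then gives $\xi_n\to\xi$ uniformly.
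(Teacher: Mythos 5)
The paper does not prove this theorem at all --- it is quoted from Saisho and Lions--Sznitman, and the only piece the paper reproves is the quantitative variation bound (its Lemma~2.2). Your uniqueness/continuity half is essentially the correct standard argument: the tangency inequality from (A) is right, and your Gronwall estimate with respect to $d(\|\phi_1\|_{[0,\cdot]}+\|\phi_2\|_{[0,\cdot]})$ is precisely the $1/2$-H\"older estimate the paper itself records later as its inequality (2.33). Two remarks on the variation bound: the partition is not ``chosen in advance from $w$'' --- in Saisho's argument (and in the paper's Lemma~2.2) the times $T_n, t_n$ are defined as hitting/exit times of $\xi$ itself, and only their \emph{number} is then counted via the modulus of continuity of $w$; and the self-referential loop you worry about is closed not by induction over the partition but by the $\ep$-trick: on each piece $\|\xi\|_\infty\le 2\delta$ caps the exponential factor, $\|\phi\|\le\beta(\|\xi\|_\infty+\|w\|_\infty)$ from (B) feeds into the Gronwall bound for $\|\xi\|_\infty$ from (A), and choosing $\ep=1/(2\beta H)$ absorbs the $\phi$-term.

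The genuine gap is in the existence half. ``Let $\xi_\Delta$ evolve as the translate of $w$ while it stays in $\bar D$ and, whenever it exits, project onto the nearest boundary point'' is not a well-defined discrete scheme: if you project the instant the path exits you are back to constructing a continuous-time reflection, which is the object whose existence is in question. The workable scheme (Saisho's) replaces $w$ by a piecewise-constant approximation and projects onto $\bar D$ only at the jump times, $\xi_\Delta(t_{k+1})=\mathrm{proj}_{\bar D}\bigl(\xi_\Delta(t_k)+w(t_{k+1})-w(t_k)\bigr)$, using uniform continuity of $w$ to keep the pre-projection point inside the $r_0$-tube where (A) makes the projection single-valued. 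More importantly, after Arzel\`a--Ascoli and Helly you still must verify property (iii) for the limit --- that $d\phi$ is carried by $\{t:\xi(t)\in\partial D\}$ and points into ${\cal N}_{\xi(t)}$. Neither fact survives weak limits automatically; one passes instead through the equivalent variational characterization $\int_0^t(y(s)-\xi(s),d\phi(s))+\frac{1}{2r_0}\int_0^t|y(s)-\xi(s)|^2\,d\|\phi\|_{[0,s]}\ge 0$ for all continuous $\bar D$-valued $y$, which is stable under the convergence you have and which, under (A), is equivalent to (iii). Your ``one verifies the three Skorohod conditions'' hides exactly this step.
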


 Doss and Priouret~\cite{doss} proved the convergence of
Wong-Zakai approximation.
They used the Lipschitz continuity of the Skorohod
map $\Gamma: w\mapsto \xi$ in the half space case.
Under conditions (A) and (B),
it is proved that 
$\Gamma$ is $1/2$-H\"older continuous map in the uniform
convergence topology.
See \cite{lions-sznitman, saisho}.
If $\Gamma$ is Lipschitz continuous, 
Doss and Priouret's approach may be 
applicable.
We use the notation
$L(w)=\Gamma(w)-w$ which corresponds to
the local time at the boundary $\partial D$.

The bounded variation norm of $\phi$ can be controlled by
the supremum norm of $w$ and the modulus of continuity.
Such an estimate is proved by Tanaka~\cite{tanaka}
in the case of convex domains.
Similar estimates are obtained by Saisho~\cite{saisho}
without assumptions of the convexity.
For our purpose, we need quantitative version of
Saisho's estimate.
To this end, we introduce the following quantities
of the continuous path $w$.
Let $0<\theta\le 1$ and define 
\begin{equation}
 \|w\|_{{\cal H},[s,t],\theta}=\sup_{s\le u<v\le t}
\frac{|w(v)-w(u)|}{|u-v|^{\theta}}.
\end{equation}
Also we use the oscillation and the total variation
of the path:
\begin{align}
\|w\|_{\infty,[s,t]}&=\max_{s\le u\le v\le t}|w(u)-w(v)|,\\
\|w\|_{[s,t]}&=\sup_{\Delta}\sum_{k=1}^N|w(t_k)-w(t_{k-1})|,
\label{total variation}
\end{align}
where $\Delta=\{s=t_0<\cdots<t_N=t\}$ is a partition 
of the interval $[s,t]$.

\begin{lem}\label{Estimate for phi}
Assume {\rm (A)} and {\rm (B)}.
Let $0<\theta\le 1$.
Then there exist positive constants $C_1,C_2,C_3$ which depend only on
$\theta$, $\delta$, $\beta$ and $r_0$ in the Assumptions {\rm (A)} and {\rm (B)}
such that 
\begin{align}
 \|\phi\|_{[s,t]}&\le
C_1\left(1+\|w\|_{{\cal H},[s,t],\theta}^{C_2}(t-s)\right)e^{C_3\|w\|_{\infty,[s,t]}}
\|w\|_{\infty,[s,t]}\qquad \mbox{for all}~0\le s<t\le T.
\end{align}
\end{lem}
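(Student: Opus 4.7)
The argument would follow Saisho's bounded-variation estimate while tracking the constants explicitly. It splits naturally into a local piece, combining Conditions (A) and (B) on a short subinterval, and a global assembly via an equipartition built from the $\theta$-H\"older norm.

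First I would establish a linear local bound. On a subinterval $[u,v]\subseteq[s,t]$ along which the trajectory of $\xi$ stays within distance $\delta$ of some boundary point $x_0$, Condition (B) supplies a unit vector $l$ with $(l,{\mathbf n}(r))\ge 1/\beta$ at every reflecting time. Integrating against $d\|\phi\|_{[u,\cdot]}$ gives
\begin{equation*}
\|\phi\|_{[u,v]}\le\beta(l,\phi(v)-\phi(u))\le\beta\bigl(\|\xi-\xi(u)\|_{\infty,[u,v]}+\|w\|_{\infty,[u,v]}\bigr).
\end{equation*}
To close on $\|\xi-\xi(u)\|_{\infty,[u,v]}$, I would apply the product rule to $|\xi(r)-\xi(u)|^2$ and use Condition (A) in the form $(\xi(r)-\xi(u),{\mathbf n}(r))\le|\xi(r)-\xi(u)|^2/(2r_0)$; a Gronwall step then yields $\|\xi-\xi(u)\|_{\infty,[u,v]}\le c_1\|w\|_{\infty,[u,v]}\exp(c_2\|\phi\|_{[u,v]}/r_0)$. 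Substituting back produces a self-improving estimate which, under an a priori control $\|\phi\|_{[u,v]}\le\eta r_0$ for small $\eta$, closes to $\|\phi\|_{[u,v]}\le c_3\|w\|_{\infty,[u,v]}$.

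For the global assembly I would choose $\tau=\min\bigl(t-s,\,c_0\|w\|_{{\cal H},[s,t],\theta}^{-1/\theta}\bigr)$ so that $\|w\|_{\infty,[s_k,s_{k+1}]}\le c_0$ on each piece of the equipartition $s_k=s+k\tau$, with $N=O(1+\|w\|_{{\cal H},[s,t],\theta}^{1/\theta}(t-s))$ subintervals. Summing the local bound over the $N$ pieces yields the prefactor $C_1(1+\|w\|_{{\cal H},\theta}^{C_2}(t-s))$ with $C_2=1/\theta$, while telescoping the Gronwall error from the (A)-step across the $N$ pieces produces the exponential factor $e^{C_3\|w\|_{\infty,[s,t]}}$.

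The main obstacle is justifying the a priori smallness $\|\phi\|_{[s_k,s_{k+1}]}\le\eta r_0$ needed to close the local bound. I would handle this by a bootstrap: define $\tau_k^*=\inf\{r>s_k:\|\phi\|_{[s_k,r]}>\eta r_0\}\wedge s_{k+1}$, establish the linear bound on $[s_k,\tau_k^*]$, and choose $c_0$ small enough (depending only on $r_0,\delta,\beta$) so that the bound forces $\tau_k^*=s_{k+1}$. Once this bootstrap is in place, tracking the dependence of $c_0,c_1,c_2,c_3$ on $\theta,r_0,\delta,\beta$ becomes a routine bookkeeping exercise that yields the explicit constants $C_1,C_2,C_3$ of the statement.
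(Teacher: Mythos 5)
Your proposal is essentially correct, but it assembles the estimate along a genuinely different decomposition than the paper. The paper does not use a deterministic equipartition: it introduces a trajectory-dependent sequence of stopping times ($T_{n-1}$ the successive hitting times of $\partial D$, $t_n$ the first time $\xi$ moves distance $\delta$ from $\xi(T_{n-1})$), proves the local bound $\|\phi\|_{[u,v]}\le\beta(\|\xi\|_{\infty,[u,v]}+\|w\|_{\infty,[u,v]})$ on each block exactly as you do, and closes the self-improvement not by a bootstrap on $\|\phi\|$ but by the built-in a priori bound $\|\xi\|_{\infty,[T_{n-1},t_n]}\le 2\delta$, which caps the Gronwall exponential by $H(\|w\|_\infty)=\exp\{\beta(2\delta+\|w\|_\infty)/(2r_0)\}$ before the same $\ep$-optimization you would need; the factor $e^{C_3\|w\|_{\infty,[s,t]}}$ in the statement is precisely this $H$, and the prefactor $1+\|w\|_{{\cal H},\theta}^{1/\theta}(t-s)$ comes from counting the stopping times via $\delta\le\|w\|_{{\cal H},\theta}(T_n-T_{n-1})^\theta\cdot(\dots)$. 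Your equipartition-plus-bootstrap route buys a cleaner (in fact stronger) conclusion with no exponential in $\|w\|_\infty$ at all, at the cost of two points you should make explicit: (i) the bootstrap must control not only $\|\phi\|_{[s_k,r]}\le\eta r_0$ but also $\|\xi\|_{\infty,[s_k,r]}\le\delta$ (via $c_0+\eta r_0<\delta$), since Condition (B) supplies a single vector $l$ only for normals at boundary points within $\delta$ of a fixed $x_0$; and (ii) the Gronwall output is not purely multiplicative as you wrote it --- it has the form $|\xi(v)-\xi(u)|\le\{(1+1/\ep)\|w\|_\infty+\ep\|\phi\|\}e^{\|\phi\|/(2r_0)}$ --- so closing the loop still requires feeding in the (B)-bound for $\|\phi\|$ and optimizing over $\ep$, exactly as in the paper; once that is done your final sentence about ``telescoping the Gronwall error into $e^{C_3\|w\|_\infty}$'' becomes unnecessary, since the exponential is already a constant $e^{\eta/2}$ on each piece.
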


\begin{proof}
Let $0\le s<t\le T$.
The proof of this lemma is essentially the same as
that of Proposition~3.1 in \cite{saisho}.
However, since the estimate in the above lemma is quantitative version
of Proposition~3.1, we give the proof for the sake of completeness and
reader's convenience.
First we define a sequence of times inductively by
\begin{align*}
T_0&=\inf\{u~|~\xi(u)\in \partial D, s\le u\le t\}, \\
t_n&=\inf\{u~|~|\xi(u)-\xi(T_{n-1})|\ge\delta, ~T_{n-1}<u\le t\},\qquad
 (n\ge 1)\\
T_n&=\inf\{u~|~\xi(u)\in\partial D,~ t_n\le u\le t\}.\qquad \quad
 \qquad\qquad (n\ge 1)
\end{align*}
We use the convention that 
the times are $t$ if the sets on the RHS are empty.
If $\xi(s)\notin \partial D$ and
$T_0=t$, $\xi$ does not hit $\partial D$
in the time interval $[s,t)$ and
$\|\phi\|_{[s,t]}=0$.
Hence it is sufficient to consider other cases.
In those cases,
since $\xi$ is a continuous path,
there exists a minimum natural number $N$ such that
$t=T_N$.
Let $T_{n-1}\le u<v\le T_{n}$~($1\le n\le N$).
We prove
\begin{align}
\|\phi\|_{[u,v]}&\le
\beta\left(\|\xi\|_{\infty,[u,v]}+\|w\|_{\infty,[u,v]}\right).\label{phi
 and xi}
\end{align}
Suppose $u,v\le t_n$.
Let $l=l_{\xi(T_{n-1})}$.
Then using the condition (B), we have
\begin{align*}
(l, \xi(v)-\xi(u))&=(l,w(v)-w(u))+(l,\phi(v)-\phi(u))\nonumber\\
&=(l,w(v)-w(u))+\int_u^v(l,{\mathbf n}(r))d\|\phi\|_{[u,r]}\nonumber\\
&\ge (l,w(v)-w(u))+\frac{1}{\beta}\|\phi\|_{[u,v]},
\end{align*}
which implies (\ref{phi and xi}).
Let us consider the case where $T_n>t_n$ and $v>t_n$.
Since $\|\phi\|_{[t_n,v]}=0$,
we obtain 
$$
\|\phi\|_{[u,v]}
=\|\phi\|_{[u,t_n]}\le\beta\left(\|\xi\|_{\infty,[u,t_n]}+\|w\|_{\infty,[u,t_n]}\right)
$$
which implies (\ref{phi and xi}).
By Lemma 2.3 (ii) in \cite{saisho}, for any $0\le s\le t\le T$,
\begin{align}
 |\xi(t)-\xi(s)|^2&\le
|w(t)-w(s)|^2+
\frac{1}{r_0}\int_{s}^t
|\xi(u)-\xi(s)|^2d\|\phi\|_{[0,u]}+
2\int_s^t\left(w(t)-w(u),d\phi(u)\right).\label{xit-xis}
\end{align}
Hence
\begin{align}
 |\xi(t)-\xi(s)|^2&\le
|w(t)-w(s)|^2+2\|w\|_{\infty,[s,t]}\|\phi\|_{[s,t]}+\frac{1}{r_0}
\int_s^t|\xi(u)-\xi(s)|^2d\|\phi\|_{[s,u]}.
\end{align}
By the Gronwall inequality (see Lemma~2.2 in \cite{saisho}), 
we obtain
\begin{align*}
 |\xi(t)-\xi(s)|^2&\le
\left(\|w\|_{\infty,[s,t]}^2+2\|w\|_{\infty,[s,t]}\|\phi\|_{[s,t]}\right)
\exp\left(\|\phi\|_{[s,t]}/{r_0}\right)\nonumber\\
&\le
\left\{\left(1+1/\ep^2\right)\|w\|_{\infty,[s,t]}^2+
\ep^2\|\phi\|_{[s,t]}^2\right\}\exp\left(\|\phi\|_{[s,t]}/{r_0}\right)
\end{align*}
and
\begin{align}
 |\xi(t)-\xi(s)|&\le
\left\{\left(1+1/\ep\right)\|w\|_{\infty,[s,t]}+
\ep\|\phi\|_{[s,t]}\right\}\exp\left(\|\phi\|_{[s,t]}/{2r_0}\right).
\label{xit-xis2}
\end{align}
Here $\ep$ is any positive number.
Now we prove that
for any $T_{n-1}\le u\le v\le T_n$,
\begin{align}
 \|\phi\|_{[u,v]}&\le
\beta\left(G(\|w\|_{\infty,[u,v]})+2\right)\|w\|_{\infty,[u,v]},
\label{estimate on phi}
\end{align}
where
\begin{align*}
 G(x)&=4\left\{1+\beta
 H(x)\right\}H(x),\\
H(x)&=\exp\left\{\beta\left(2\delta+x\right)/(2r_0)\right\}.
\end{align*}
We consider three cases
(i)~$T_{n-1}\le u<v\le t_n$, (ii)~$t_n\le u<v\le T_n$,
(iii)~$u\le t_n<v\le T_n$.
Let us consider the case (i).
In this case $\|\xi\|_{\infty,[u,v]}\le 2\delta$.
By combining this, (\ref{phi and xi}) and (\ref{xit-xis2}),
we have
\begin{align*}
 \|\xi\|_{\infty,[u,v]}&\le
\left\{\left(1+1/\ep\right)\|w\|_{\infty,[u,v]}+
\ep\beta\left(\|\xi\|_{\infty,[u,v]}+\|w\|_{\infty,[u,v]}\right)\right\}
H(\|w\|_{\infty,[u,v]}).
\end{align*}
Setting
$
 \ep=1/\left(2\beta H(\|w\|_{\infty,[u,v]})\right),
$
we obtain
\begin{align}
 \|\xi\|_{\infty,[u,v]}&\le
4\left(1+\beta H(\|w\|_{\infty,[u,v]})\right)
H(\|w\|_{\infty,[u,v]})\|w\|_{\infty,[u,v]}.
\end{align}
We consider the case (ii).
In this case, $\phi(r)=\phi(u)$ for all $u\le r\le v$ and
$\|\xi\|_{\infty,[u,v]}=\|w\|_{\infty,[u,v]}$.
Hence in the case of (iii),
\begin{align}
 \|\xi\|_{\infty,[u,v]}&\le
\|\xi\|_{\infty,[u,t_n]}+\|\xi\|_{\infty,[t_n,v]}\nonumber\\
&\le
4\left(1+\beta H(\|w\|_{\infty,[u,v]})\right)
H(\|w\|_{\infty,[u,v]})\|w\|_{\infty,[u,v]}+\|w\|_{\infty,[u,v]}.
\end{align}
Consequently, by (\ref{phi and xi}), the proof of (\ref{estimate on
 phi})
is finished.
Using (\ref{estimate on phi}),
\begin{align}
 \|\phi\|_{[s,t]}&\le
\beta\sum_{n=1}^{N}
\left(G(\|w\|_{\infty,[T_{n-1},T_n]})+2\right)\|w\|_{\infty,[T_{n-1},T_n]}
\nonumber\\
&\le N\beta(G(\|w\|_{\infty,[s,t]})+2)\|w\|_{\infty,[s,t]}.
\label{estimate for phi2}
\end{align}
We estimate $N$.
Suppose $N\ge 2$.
Since for any $1\le n\le N-1$,
\begin{align}
 \delta&=|\xi(T_{n-1})-\xi(t_n))|\nonumber\\
&\le
\|\xi\|_{\infty,[T_{n-1},T_n]}\nonumber\\
&\le
\left\{4\left(1+\beta H(\|w\|_{\infty,[T_{n-1},T_n]})\right)
H(\|w\|_{\infty,[T_{n-1},T_n]})+1\right\}
\|w\|_{\infty,[T_{n-1},T_n]}\nonumber\\
&\le
\left\{4\left(1+\beta H(\|w\|_{\infty,[T_{n-1},T_n]})\right)
H(\|w\|_{\infty,[T_{n-1},T_n]})+1\right\}
\|w\|_{{\cal H},[s,t],\theta}(T_n-T_{n-1})^{\theta}.
\end{align}
Thus we have
\begin{align*}
 T_n-T_{n-1}&\ge
\delta^{1/\theta}
\left[\left\{4\left(1+\beta H(\|w\|_{\infty,[T_{n-1},T_n]})\right)
H\left(\|w\|_{\infty,[T_{n-1},T_n]}\right)+1\right\}
\|w\|_{{\cal H},[s,t],\theta}\right]^{-1/\theta}.
\end{align*}
Summing the numbers on both sides from $n=1$ to $n=N-1$,
we obtain
\begin{align*}
 t-s&\ge
(N-1)
\delta^{1/\theta}
\left[\left\{4\left(1+\beta H(\|w\|_{\infty,[T_{n-1},T_n]})\right)
H\left(\|w\|_{\infty,[T_{n-1},T_n]}\right)+1\right\}
\|w\|_{{\cal H},[s,t],\theta}\right]^{-1/\theta}
\end{align*}
and
\begin{align}
 N-1&\le
\left[\delta^{-1}
\left\{4\left(1+\beta H(\|w\|_{\infty,[s,t]})\right)
H(\|w\|_{\infty,[s,t]})+1\right\}
\|w\|_{{\cal H},[s,t],\theta}\right]^{1/\theta}(t-s).
\label{estimate for N}
\end{align}
Clearly, this estimate is true when $N=1$.
The estimates (\ref{estimate for phi2}) and
(\ref{estimate for N}) complete the proof of the lemma.
\end{proof}

It is easy to see that the term
$\|w\|_{{\cal H},[s,t], \theta }$
in the above estimate 
can be replaced by a quantity defined by a 
modulus of continuity of $w$.
We emphasize that we just need the continuity of $w$
to estimate the bounded variation norm of $\phi$.
Also we note that this estimate is not sharp in
the sense that the quantity on the RHS
does not depend on the starting point $x$
although 
$\|\phi\|_{[s,t]}$ does.
If $w$ is a continuous bounded variation path,
we can prove the following estimate.
This estimate is used to prove the
exponential integrability of
$Y^N$ in the proof of Lemma~\ref{Moment estimate for XN}.

\begin{lem}\label{estimate for xi}
 Assume condition {\rm (A)} and
the existence of the solution $\xi$ to the Skorohod problem
for a continuous bounded variation path $w$.
Then the total variation of the solution $\xi$ has the estimate:
\begin{equation}
 \|\xi\|_{[s,t]}\le2(\sqrt{2}+1)\|w\|_{[s,t]}
\label{estimate for xi by w}
\end{equation}
\end{lem}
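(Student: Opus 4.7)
The plan is to bound $\|\phi\|_{[s,t]}$ in terms of $\|w\|_{[s,t]}$, since the subadditivity of total variation applied to the decomposition $\xi = w+\phi$ gives $\|\xi\|_{[s,t]} \le \|w\|_{[s,t]} + \|\phi\|_{[s,t]}$; it then suffices to prove $\|\phi\|_{[s,t]} \le (2\sqrt{2}+1)\|w\|_{[s,t]}$.

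The central tool I would use is the energy inequality (\ref{xit-xis}) that already appears in the proof of Lemma~\ref{Estimate for phi} and which requires only condition (A): for all $s\le u\le v\le t$,
\[
|\xi(v)-\xi(u)|^2 \le |w(v)-w(u)|^2 + \frac{1}{r_0}\int_u^v |\xi(r)-\xi(u)|^2\,d\|\phi\|_{[0,r]} + 2\int_u^v (w(v)-w(r),d\phi(r)).
\]
For a continuous bounded-variation $w$, I would control each piece by the total variation: $|w(v)-w(u)|\le \|w\|_{[u,v]}$; the Riemann--Stieltjes estimate $\bigl|\int_u^v(w(v)-w(r),d\phi(r))\bigr|\le \|w\|_{[u,v]}\,\|\phi\|_{[u,v]}$; and $\int_u^v |\xi(r)-\xi(u)|^2\,d\|\phi\|_{[0,r]}\le \|\xi-\xi(u)\|_{\infty,[u,v]}^2\,\|\phi\|_{[u,v]}$.

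Next I would introduce a partition $s=u_0<u_1<\cdots<u_N=t$ on which $\|w\|_{[u_{k-1},u_k]}$ is uniformly small (say at most $\|w\|_{[s,t]}/N$), apply the inequality above on every subinterval together with the triangle bound $\|\phi\|_{[u_{k-1},u_k]} \le \|\xi\|_{[u_{k-1},u_k]} + \|w\|_{[u_{k-1},u_k]}$, sum, and take the limit $N\to\infty$. Because $\xi$ is continuous, $\max_k \|\xi-\xi(u_{k-1})\|_{\infty,[u_{k-1},u_k]}\to 0$, which is precisely what kills the $r_0^{-1}$ contribution in the limit. Writing $V=\|\xi\|_{[s,t]}$ and $W=\|w\|_{[s,t]}$, the limiting inequality should take the self-referential quadratic form
\[
V^2 \le 4VW + 4W^2,
\]
whose positive root
\[
V \le \frac{4W+\sqrt{16W^2+16W^2}}{2} = 2(\sqrt{2}+1)W
\]
is the required bound.

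The principal obstacle, as just indicated, is extracting a constant independent of $r_0$ from an inequality that visibly contains the factor $r_0^{-1}$. It is precisely the BV assumption on $w$ that makes this possible: combined with the continuity of $\xi$, it forces the subinterval oscillations of $\xi$ to tend uniformly to zero under partition refinement, and this is what causes the $r_0$-dependent term to drop out in the summed, limiting estimate. Making this absorption quantitative --- not merely asymptotic --- so that it cleanly produces the quadratic inequality above is where the main technical care will be needed.
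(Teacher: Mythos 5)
Your setup coincides with the paper's: start from the energy inequality (\ref{xit-xis}), bound the Stieltjes integrals by total variations, refine a partition, and let the $r_0^{-1}$ term die in the limit because the subinterval oscillations of $\xi$ and $w$ tend to zero while the total variations stay additive. That part is right, and it is indeed the BV hypothesis that makes it work. The genuine gap is in the step you defer: how the per-interval inequality is summed. Each subinterval gives an inequality that is quadratic in the increment $|\xi(u_k)-\xi(u_{k-1})|$, and summing those squared inequalities over a refining partition yields nothing, since $\sum_k|\xi(u_k)-\xi(u_{k-1})|^2\le\bigl(\max_k|\xi(u_k)-\xi(u_{k-1})|\bigr)\,\|\xi\|_{[s,t]}\to 0$. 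So some square-root manipulation must be performed \emph{before} summing, and the constant you end up with depends entirely on how that is done; taking square roots naively ($\eta_0\le\sqrt{3}\,\omega+\sqrt{2\omega\eta}+\cdots$, then Cauchy--Schwarz on the cross terms) sums correctly but lands on a different, slightly worse constant. Your ``limiting inequality'' $V^2\le 4VW+4W^2$ is reverse-engineered from the target $2(\sqrt2+1)$ and is never derived; it does not follow from the summation you describe.

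The paper closes exactly this gap with a specific regrouping. With $\eta_0=|\xi(t)-\xi(s)|$, $\eta=\|\xi\|_{[s,t]}$, $\omega=\|w\|_{[s,t]}$ and $k$ denoting the $r_0^{-1}$ remainder, one has $\eta_0^2\le 2\omega^2+2\omega\eta+k$, hence, using $\eta_0\le\eta$,
\begin{equation*}
2\eta_0^2\le\eta_0^2+\eta^2\le\omega^2+(\omega+\eta)^2+k,
\qquad\text{so}\qquad
\sqrt2\,\eta_0\le\omega+(\omega+\eta)+\sqrt{k}=2\omega+\eta+\sqrt{k}.
\end{equation*}
Every term on the right is now additive over partitions, $\sum_i\sqrt{k(t_{i-1},t_i)}\to 0$, and summing $\eta_0$ over a refining partition recovers $\eta$ on the left; the limit is the \emph{linear} inequality $\sqrt2\,\|\xi\|_{[s,t]}\le 2\|w\|_{[s,t]}+\|\xi\|_{[s,t]}$, which gives $\|\xi\|_{[s,t]}\le 2(\sqrt2+1)\|w\|_{[s,t]}$. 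Without this step (or an equivalent substitute) your outline does not reach the stated constant, so you should supply it explicitly rather than positing the quadratic form of the answer.
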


\begin{proof}
We write
\begin{equation*}
\omega(s,t)=\|w\|_{[s,t]},\quad
\eta_0(s,t)=|\xi(t)-\xi(s)|,\quad
\eta(s,t)=\|\xi\|_{[s,t]}.
\end{equation*}
We use
the estimate (\ref{xit-xis}).
Noting
\begin{align}
 \frac{1}{r_0}\int_{s}^t
\eta_0(s,u)^2d|\phi|_u&\le
\frac{1}{r_0}\left(
\int_s^t\eta(s,u)^2d_u\eta(s,u)+\int_s^t\eta(s,u)^2d_u\omega(s,u)\right)\nonumber\\
&\le \frac{1}{r_0}\left(\frac{1}{3}\eta(s,t)^3+\eta(s,t)^2\omega(s,t)\right)=:k(s,t)
\end{align}
and
\begin{align*}
 \left|2\int_s^t(w(t)-w(u),d\phi(u))\right|&\le
2\left(\int_s^t\omega(u,t)d_u\omega(s,u)+\int_s^t\omega(u,t)d_u\eta(s,u)\right)
\nonumber\\
&\le 2\left(\int_s^t(\omega(s,t)-\omega(s,u))d_u\omega(s,u)
+\int_s^t\omega(s,t)d_u\eta(s,u)\right)\nonumber\\
& =\omega(s,t)^2+2\omega(s,t)\eta(s,t),
\end{align*}
we obtain
\begin{equation}
 \eta_0(s,t)^2\le 2\omega(s,t)^2+2\omega(s,t)\eta(s,t)+k(s,t)
\end{equation}
and
\begin{equation*}
 2\eta_0(s,t)^2\le\eta_0(s,t)^2+\eta(s,t)^2\le
\omega(s,t)^2+(\omega(s,t)+\eta(s,t))^2+k(s,t).
\end{equation*}
Therefore we have
\begin{equation}
 \sqrt{2}\eta_0(s,t)\le 2\omega(s,t)+\eta(s,t)+\sqrt{k(s,t)}.
\end{equation}
Note that 
$$
\eta(s,t)=\lim_{|\Delta|\to 0}\sum_{i=1}^n\eta_0(t_{i-1},t_i),
$$
where $\Delta$ is a partition 
$s=t_0<\cdots<t_n=t$ and $|\Delta|=\sup_i(t_i-t_{i-1})$.
We consider the term $\sqrt{k(s,t)}$.
Using
\begin{align*}
\sqrt{k(s,t)}&\le
\sqrt{\eta(s,t)/(3r_0)}\eta(s,t)+
\sqrt{\omega(s,t)/(r_0)}\eta(s,t)
\end{align*}
and the additivity, $\eta(s,t)=\sum_{i=1}^n\eta(t_{i-1},t_i)$,
we obtain
\begin{align*}
 \sum_{i=1}^n\sqrt{k(t_{i-1},t_i)}&\le
\sup_i\left(
\sqrt{\eta(t_{i-1},t_i)/(3r_0)}+
\sqrt{\omega(t_{i-1},t_i)/(r_0)}
\right)\eta(s,t)\to 0\qquad \mbox{as $|\Delta|\to 0$}.
\end{align*}
Thus, we get
$\sqrt{2}\eta(s,t)\le 2\omega(s,t)+\eta(s,t)$ which proves 
the desired inequality.
\end{proof}

\begin{rem}
{\rm  Under the admissibility of the domain, Lions and Sznitman
proved that $\|\phi\|_{[s,t]}\le \|w\|_{[s,t]}$
which implies $\|\xi\|_{[s,t]}\le 2\|w\|_{[s,t]}$.
They use regularity property of the distance function
from the boundary $\partial D$.
So we may need some regularity condition on the boundary
to prove such a stronger estimate.
We note that there is a study of the regularity of
the distance function, {\it e.g.}, \cite{poliquin}.
However, the estimate (\ref{estimate for xi by w})
is enough for our purposes.}
\end{rem}

Let us recall the existence of strong solution and the 
uniqueness which is due to \cite{tanaka, lions-sznitman, saisho}.
Let $(\Omega,{\cal F},P)$ be a complete
probability space and ${\cal F}_t$ be the 
right-continuous filtration with the property
that ${\cal F}_t$ contains all null sets of
$(\Omega,{\cal F}, P)$.
Let $B=B(t)$ be an ${\cal F}_t$-Brownian motion on $\RR^n$.
Let $\sigma\in C(\RR^d\to \RR^n\otimes \RR^d)$,
$b\in C(\RR^d\to \RR^d)$ be continuous mappings.
We consider an SDE with reflecting boundary condition on $\bar{D}$:
\begin{align}
X(t)=x+\int_0^t\sigma(X(s))dB(s)+\int_0^tb(X(s))ds+
\Phi(t),\label{reflecting sde}
\end{align}
where $x\in \bar{D}$.
We denote this SDE by SDE$(\sigma,b)$ simply.
A pair of ${\cal F}_t$-adapted continuous processes $(X(t),\Phi(t))$ 
is called a solution to
(\ref{reflecting sde}) if the following holds.
Let
\begin{align}
Y(t)&=x+\int_0^t\sigma(X(s))dB(s)+\int_0^tb(X(s))ds 
\end{align}
Then $(X(\cdot,\omega),\Phi(\cdot,\omega))$ 
is a solution of the Skorohod problem
associated with $Y(\cdot,\omega)$ for 
almost all $\omega\in \Omega$.
The following result is due to \cite{saisho}.

\begin{thm}\label{Saisho}
 Assume $D$ satisfies conditions {\rm (A)} and {\rm (B)}
and $\sigma$ and $b$ are bounded and global Lipschitz maps.
Then there exists a unique strong solution to
$(\ref{reflecting sde})$.
\end{thm}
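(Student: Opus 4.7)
\medskip
\noindent\textbf{Proof proposal.}
The plan is to construct a solution via a Picard scheme and to prove uniqueness through the pathwise stability of the Skorohod map --- the two-path analogue of (\ref{xit-xis}) --- combined with the a priori bound on the total variation of the reflecting term supplied by Lemma~\ref{Estimate for phi}.

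First I would set $X^{(0)}(t)\equiv x$ and iterate
\[ Y^{(n)}(t)=x+\int_0^t\sigma(X^{(n)}(s))dB(s)+\int_0^tb(X^{(n)}(s))ds,\qquad X^{(n+1)}=\Gamma(Y^{(n)}),\quad \Phi^{(n+1)}=X^{(n+1)}-Y^{(n)}, \]
which is well-defined because $Y^{(n)}$ has continuous sample paths and Theorem~\ref{Existence of Skorohod problem} applies. Boundedness of $\sigma$ and $b$ together with Burkholder--Davis--Gundy yields $L^p$-bounds on $\|Y^{(n)}\|_{\infty,[0,T]}$ uniform in $n$, and Kolmogorov's continuity criterion furnishes $L^p$-bounds on $\|Y^{(n)}\|_{{\cal H},[0,T],\theta}$ for every $\theta\in(0,1/2)$ and $p<\infty$. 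Feeding these into Lemma~\ref{Estimate for phi} gives $L^p$-bounds on $\|\Phi^{(n)}\|_{[0,T]}$ that are uniform in $n$.

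The core ingredient is the two-path analogue of (\ref{xit-xis}), obtained by polarizing the uniform exterior sphere inequality $(y-x,\mathbf{n})\ge -|y-x|^2/(2r_0)$ at each boundary point: for two Skorohod pairs $(\xi_i,\phi_i)$ associated with drivers $w_i$,
\[ |\xi_1(t)-\xi_2(t)|^2\le |(w_1-w_2)(t)|^2+\frac{1}{r_0}\int_0^t|\xi_1(u)-\xi_2(u)|^2\,d(\|\phi_1\|+\|\phi_2\|)_u+2\int_0^t\bigl((w_1-w_2)(t)-(w_1-w_2)(u),d(\phi_1-\phi_2)(u)\bigr). \]
Applying this with $w_i=Y^{(n+i-1)}$, and estimating the stochastic integral part of $Y^{(n)}-Y^{(n-1)}$ via Burkholder--Davis--Gundy together with the global Lipschitz bounds on $\sigma,b$, reduces the problem to a Gronwall-type inequality whose only non-routine feature is the non-absolutely-continuous integrator $d(\|\phi_1\|+\|\phi_2\|)$ on the right-hand side.

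The main obstacle is precisely this integrator, which blocks standard Gronwall. The plan is to localize with the stopping times $\tau_M^{(n)}=\inf\{t\ge 0:\|\Phi^{(n)}\|_{[0,t]}+\|\Phi^{(n-1)}\|_{[0,t]}\ge M\}$, apply Gronwall on $[0,\cdot\wedge\tau_M^{(n)}]$ so that the reflecting term contributes only a constant factor $e^{M/r_0}$, and iterate in $n$ to obtain super-linear decay of the form $(e^{M/r_0}CT)^n/n!$; the cut-off is then removed by Chebyshev ($P(\tau_M^{(n)}\le T)\le CM^{-p}$ for every $p$, by the uniform $L^p$-bounds above) and H\"older. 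This shows $\{X^{(n)}\}$ is Cauchy in $L^2(\Omega;C([0,T]\to\bar D))$; the limit together with the corresponding reflecting term solves (\ref{reflecting sde}) by the continuity of $\Gamma$ in Theorem~\ref{Existence of Skorohod problem}. Uniqueness follows from the very same stability inequality applied to any two solutions.
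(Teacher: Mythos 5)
The paper gives no proof of this theorem; it is quoted directly from \cite{saisho}. Judged on its own, your proposal has a genuine gap at the decisive step: the claim that the localized Picard iteration decays like $(e^{M/r_0}CT)^n/n!$. After applying Gronwall with respect to the integrator $d(\|\phi_1\|+\|\phi_2\|)$ on $[0,t\wedge\tau_M]$, the inhomogeneous cross term
\begin{align*}
2\int_0^t\bigl((w_1-w_2)(t)-(w_1-w_2)(u),\,d(\phi_1-\phi_2)(u)\bigr)
\end{align*}
can only be bounded by $4M\,\|w_1-w_2\|_{\infty,[0,t]}$: it is \emph{linear}, not quadratic, in $w_1-w_2$, because $\|\phi_1-\phi_2\|_{[0,t]}$ is controlled by $\|\phi_1\|_{[0,t]}+\|\phi_2\|_{[0,t]}\le M$ and not by the difference of the drivers. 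Setting $a_n(t)=E[\sup_{s\le t}|X^{(n)}(s)-X^{(n-1)}(s)|^2]$, Burkholder--Davis--Gundy and the Lipschitz hypotheses give $E[\|Y^{(n)}-Y^{(n-1)}\|_{\infty,[0,t]}^2]\le C\int_0^t a_n(s)\,ds$, and the Schwarz inequality applied to the cross term then yields only
\begin{align*}
a_{n+1}(t)\le e^{M/r_0}\left\{C\int_0^t a_n(s)\,ds+CM\left(\int_0^t a_n(s)\,ds\right)^{1/2}\right\}.
\end{align*}
The square root destroys the factorial decay: a recursion of the form $a_{n+1}\le K(\int a_n)^{1/2}+\cdots$ drives $a_n$ toward a positive fixed point of order $K^2T$, not to $0$, so the scheme is not shown to be Cauchy, and no choice of $M=M_n$ repairs this. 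This is exactly the manifestation of the fact, recalled around (\ref{Hoelder continuous map}), that under (A) and (B) alone the Skorohod map is only $1/2$-H\"older continuous; a $1/2$-H\"older map cannot be fed into a naive contraction argument, and the ``non-absolutely-continuous integrator'' you single out as the main obstacle is not the real one. The same defect infects your uniqueness argument if it is literally run through the integrated stability inequality plus expectations.

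What actually works: for uniqueness, apply It\^o's formula to $e^{-(\|\Phi_1\|_{[0,t]}+\|\Phi_2\|_{[0,t]})/r_0}|X_1(t)-X_2(t)|^2$ for two solutions driven by the same Brownian motion. Condition (A) gives the pointwise bound $2(X_1-X_2,d\Phi_1-d\Phi_2)\le r_0^{-1}|X_1-X_2|^2\,d(\|\Phi_1\|+\|\Phi_2\|)$, which the exponential weight absorbs exactly; the remaining stochastic integral is a true mean-zero martingale and the drift and bracket terms are genuinely quadratic in $|X_1-X_2|$, so ordinary Gronwall gives $X_1=X_2$ with no square root appearing. For existence, the route in \cite{saisho} (mirrored by Proposition~\ref{Reflecting ODE} in the deterministic setting) is not a contraction: one takes time-discretized approximations, establishes uniform moment and total-variation bounds as in Lemma~\ref{Estimate for phi}, extracts a limit by compactness/tightness, identifies it as a solution using the continuity of $\Gamma$ from Theorem~\ref{Existence of Skorohod problem}, and combines the resulting weak existence with pathwise uniqueness via Yamada--Watanabe to obtain the strong solution. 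Note also that the clean Gronwall closure in the paper's Theorem~\ref{Euler-Peano approximation} is achieved only by invoking the additional condition (C) and the Lions--Sznitman weight $e^{-\frac{2}{\gamma}(f(\cdot)+f(\cdot))}$; Theorem~\ref{Saisho} is asserted under (A) and (B) alone, so that device is not available to you here.
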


Here we note the following.
This follows from Garsia-Rodemich-Rumsey's estimate.

\begin{lem}\label{Garsia}
Let $F=F(t,\omega)$ be a $\RR^d$-valued continuous process with
the property that for all $p\ge 1$
\begin{align}
E[|F(t)-F(s)|^{2p}]&\le C_p|t-s|^p\qquad 0\le s\le t\le T.
\end{align}
Then for all $0<\theta<1$ and $p\ge 1$
there exist constants $C_{p,\theta}'$ which depends only on 
$C_p$ and $\theta$ such that
\begin{align}
 E[\|F\|_{{\cal H},[0,T],\theta/2}^p]&\le C_{p,\theta}'.
\end{align}
\end{lem}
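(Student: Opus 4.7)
The plan is to apply the Garsia--Rodemich--Rumsey (GRR) inequality to $F$ pathwise, then take expectations and use the moment hypothesis together with Fubini's theorem.

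Recall that GRR says: for continuous increasing $\Psi,p$ with $\Psi(0)=p(0)=0$, $\Psi$ even and convex, any continuous $f:[0,T]\to\RR^d$ satisfies
\begin{equation*}
|f(t)-f(s)|\le 8\int_0^{|t-s|}\Psi^{-1}\!\left(\tfrac{4B}{u^2}\right)dp(u),\qquad
B=\int_0^T\!\!\int_0^T\Psi\!\left(\tfrac{|f(t)-f(s)|}{p(|t-s|)}\right)ds\,dt.
\end{equation*}
I would choose $\Psi(x)=x^{2q}$ and $p(u)=u^{\theta/2+1/q}$, where $q$ is a large integer to be fixed later. A direct computation of the resulting integral in $u$ gives the pathwise bound
\begin{equation*}
\|F\|_{\mathcal{H},[0,T],\theta/2}
\le K_{q,\theta}\left(\int_0^T\!\!\int_0^T\frac{|F(t)-F(s)|^{2q}}{|t-s|^{q\theta+2}}\,ds\,dt\right)^{1/(2q)}
\end{equation*}
for some constant $K_{q,\theta}$ (the integral in $u$ converges precisely because the exponent $\alpha=\theta/2+1/q$ was chosen so that $\alpha-1/q=\theta/2>0$).

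Given the target exponent $p$, I would choose $q$ large enough that $2q\ge p$ and $q(1-\theta)>1$. The first condition allows Jensen's inequality to push the expectation inside the $1/(2q)$ power, and the second is exactly what makes the double integral of $|t-s|^{q-q\theta-2}$ over $[0,T]^2$ finite. Taking the $p$-th power, Jensen, and Fubini yield
\begin{equation*}
E\bigl[\|F\|_{\mathcal{H},[0,T],\theta/2}^{p}\bigr]
\le K_{q,\theta}^{p}\left(\int_0^T\!\!\int_0^T \frac{E[|F(t)-F(s)|^{2q}]}{|t-s|^{q\theta+2}}\,ds\,dt\right)^{p/(2q)}.
\end{equation*}
Plugging in the hypothesis $E[|F(t)-F(s)|^{2q}]\le C_q|t-s|^{q}$ reduces the right-hand side to a finite constant depending only on $C_q$, $\theta$, $p$ and $T$, giving the desired bound $C'_{p,\theta}$.

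There is no real obstacle here; the only mildly delicate point is the simultaneous calibration of the two parameters: $q$ must exceed $\max(p/2,\,1/(1-\theta))$ so that both Jensen's inequality and the integrability of $|t-s|^{q-q\theta-2}$ near the diagonal are ensured. Since the hypothesis provides moment bounds for every $p\ge 1$, such a $q$ always exists, and the bound $C'_{p,\theta}$ then depends on $C_q$ for this particular $q=q(p,\theta)$, which is consistent with the statement of the lemma.
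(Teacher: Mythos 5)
Your proof is correct and is precisely the argument the paper has in mind: the paper offers no details, stating only that the lemma ``follows from Garsia-Rodemich-Rumsey's estimate,'' and your choice of $\Psi(x)=x^{2q}$, $p(u)=u^{\theta/2+1/q}$ with $q>\max(p/2,1/(1-\theta))$, followed by Jensen and Fubini, is the standard way to carry that out. The only cosmetic point is that your constant depends on $C_q$ for the calibrated $q=q(p,\theta)$ (and on $T$) rather than literally on $C_p$, but since the hypothesis supplies moment bounds for all orders this matches the intended reading of the lemma.
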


If $\sigma$ is bounded, then
the quadratic variation of $M(t)=\int_0^t\sigma(X(s))dB(s)$
is bounded and 
we see the exponential integrability of
$\max_{0\le t\le T}|M(t)|$.
Therefore,
using Lemma~\ref{Estimate for phi} and
Lemma~\ref{Garsia} and Burkholder-Davis-Gundy's inequality,
we immediately obtain the following estimate.

\begin{lem}
 Assume the same assumptions as in Theorem~$\ref{Saisho}$.
 Let $p\ge 1$.
There exists a positive constant $C_p$ such that
\begin{align}
E[\|X\|_{\infty,[s,t]}^{2p}]&\le C_p|t-s|^p,\label{estimate for x}\\
 E[|\Phi\|_{[s,t]}^{2p}]&\le
C_p|t-s|^p.\label{estimate for phi}
\end{align}
\end{lem}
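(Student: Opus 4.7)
The plan is dictated by the paragraph preceding the lemma: decompose $X$, apply Lemma~\ref{Estimate for phi} pathwise, and dispatch the resulting moments via Burkholder-Davis-Gundy, an exponential martingale bound, and Lemma~\ref{Garsia}.

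First I would set
\[
Y(t)=x+\int_0^t\sigma(X(u))dB(u)+\int_0^t b(X(u))du,
\]
so that $X=Y+\Phi$ and, for almost every $\omega$, $(X(\cdot,\omega),\Phi(\cdot,\omega))$ is the Skorohod solution for $Y(\cdot,\omega)$. Boundedness of $\sigma$ and $b$ together with Burkholder-Davis-Gundy immediately yields
\[
E[\|Y\|_{\infty,[s,t]}^{2q}]\le C_q(t-s)^q\qquad\text{for every }q\ge 1.
\]
Moreover, writing $M$ for the martingale part of $Y$, the quadratic variation $\langle M\rangle_T$ is bounded by $\|\sigma\|_\infty^2\,T$ deterministically, so the standard exponential martingale inequality gives $E[\exp(\lambda\|Y\|_{\infty,[0,T]})]<\infty$ for every $\lambda>0$. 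Fixing any $\theta\in(0,1)$ and applying Lemma~\ref{Garsia} to the above moment bound, I obtain
\[
E\bigl[\|Y\|_{{\cal H},[0,T],\theta/2}^q\bigr]\le C_{q,\theta}\qquad\text{for every }q\ge 1.
\]

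Next I would apply Lemma~\ref{Estimate for phi} pathwise with exponent $\theta/2$, using the trivial bound $\|Y\|_{{\cal H},[s,t],\theta/2}\le\|Y\|_{{\cal H},[0,T],\theta/2}$, to obtain
\[
\|\Phi\|_{[s,t]}\le C_1\bigl(1+\|Y\|_{{\cal H},[0,T],\theta/2}^{C_2}\,T\bigr)\,e^{C_3\|Y\|_{\infty,[0,T]}}\,\|Y\|_{\infty,[s,t]}.
\]
Raising this to the $2p$ power and applying H\"older's inequality with three finite conjugate exponents $q_1,q_2,q_3$, the first factor has finite $L^{2pq_1}$ norm by the Garsia step, the second has finite $L^{2pq_2}$ norm by exponential integrability, and the third contributes
\[
E[\|Y\|_{\infty,[s,t]}^{2pq_3}]^{1/q_3}\le C(t-s)^p
\]
by BDG. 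Multiplying these gives $E[\|\Phi\|_{[s,t]}^{2p}]\le C_p(t-s)^p$, and the triangle inequality $\|X\|_{\infty,[s,t]}\le\|Y\|_{\infty,[s,t]}+\|\Phi\|_{[s,t]}$ transfers the bound to $X$.

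There is no deep obstacle here; the only point requiring care is that the exponential factor and the ${\cal H}$-norm factor both admit moments of \emph{all} orders uniformly in $s,t\in[0,T]$, which is precisely what permits H\"older to absorb them into constants while the $(t-s)^p$ scaling is carried entirely by the $\|Y\|_{\infty,[s,t]}$ factor. This uniform-in-$(s,t)$ moment control is why one bounds $\|Y\|_{{\cal H},[s,t],\theta/2}$ and $\|Y\|_{\infty,[s,t]}$ inside the exponent by their $[0,T]$ counterparts before taking expectation, rather than trying to gain extra $(t-s)$ factors from them.
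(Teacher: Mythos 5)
Your proposal is correct and follows essentially the same route the paper indicates: decompose $X=Y+\Phi$, bound $\|\Phi\|_{[s,t]}$ pathwise via Lemma~\ref{Estimate for phi}, and control the resulting factors by Burkholder--Davis--Gundy, the exponential integrability of the bounded-quadratic-variation martingale, and Lemma~\ref{Garsia}, with H\"older's inequality letting the $\|Y\|_{\infty,[s,t]}$ factor carry the $(t-s)^{p}$ scaling. The paper only sketches this argument in the preceding paragraph, and your write-up fills it in faithfully.
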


From now on, 
we always assume that
$\sigma$ belongs to $C^2_b$ and $b$
belongs to $C^1_b$.
Now, we are going to explain our main theorem.
Let $N\in {\mathbb N}$.
Let 
$X^N(t)$ be the solution to the reflecting ODE:
\begin{align}
 X^N(t)=x+\int_0^t\sigma(X^N(s))dB^{N}(s)+\int_0^tb(X^N(s))ds+
\Phi^N(t),\label{reflecting ode}
\end{align}
where
\begin{align}
 B^{N}(t)&=B(t_{k-1}^N)+\frac{\Delta_N B_k}{\Delta_N}(t-t_{k-1}^N)
\qquad t^N_{k-1}\le t\le t^N_k,\\
\Delta_N B_k&=B(t_{k}^N)-B(t^N_{k-1}),\qquad \Delta_N=T/N,\qquad
t_k^N=\frac{kT}{N}.
\end{align}
We already explained the existence of the strong solution 
to a reflecting SDE driven by a Brownian motion.
The definition of the solution to the above equation 
is similar to reflecting SDE.
The existence and uniqueness of the solutions follows
from Theorem~\ref{Saisho}.
We prove an existence and uniqueness 
theorem when the driving path is a
continuous bounded variation path
in Section~\ref{Proof of main theorem}.
The following is our main theorem.
In this paper, we do not intend to obtain the best order.
The order given below is probably far from best.

\begin{thm}\label{Main theorem}
 Assume {\rm (A)}, {\rm (B)} and {\rm (C)}.
Let $X$ be the solution to
{\rm SDE}$(\sigma,\tilde{b})$,
where
$\tilde{b}=b+\frac{1}{2}\tr (D\sigma)(\sigma)$.
Let $0<\theta<1$.
For any $p\ge 1$, there exists a positive constant
$C_{p,T,\theta}$ such that for all $N\in {\mathbb N}$,
\begin{align}
E\left[\max_{0\le t\le T}|X^N(t)-X(t)|^{2p}\right]&\le C_{p,T,\theta}
\Delta_N^{\theta/6}.
\end{align}
\end{thm}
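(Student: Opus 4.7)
\medskip\noindent\textbf{Proof proposal.} The plan is to interpose an Euler--Peano approximation $\tilde X^N$ between $X^N$ and $X$ and to estimate the two resulting differences separately. Writing $\eta_N(s)=t_{k-1}^N$ for $t_{k-1}^N\le s<t_k^N$, let $\tilde X^N$ be the unique solution of the reflecting SDE
\begin{align*}
\tilde X^N(t)=x+\int_0^t\sigma(\tilde X^N(\eta_N(s)))dB(s)+\int_0^t\tilde b(\tilde X^N(\eta_N(s)))ds+\tilde\Phi^N(t),
\end{align*}
whose well-posedness follows from an adaptation of Theorem~\ref{Saisho}. The triangle inequality $|X^N-X|^{2p}\le 2^{2p-1}(|X^N-\tilde X^N|^{2p}+|\tilde X^N-X|^{2p})$ reduces the theorem to proving the stated bound for each term: the Euler--Peano step (Section~3) for $E[\max|\tilde X^N-X|^{2p}]$ and the Wong--Zakai step (Section~4) for $E[\max|X^N-\tilde X^N|^{2p}]$.

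For the Euler--Peano step, both $\tilde X^N$ and $X$ are images under the Skorohod map of driving processes that differ only by the time-discretisation of $\sigma$ and $\tilde b$. Since $\Gamma$ is only $1/2$-H\"older, a direct sup-norm Gronwall argument fails; instead I would apply It\^o's formula to $e^{-\gamma(f(\tilde X^N)+f(X))}|\tilde X^N-X|^2$, so that condition (C) absorbs the singular boundary terms $(\tilde X^N-X,{\mathbf n})\,d\|\tilde\Phi^N\|$ and $(X-\tilde X^N,{\mathbf n})\,d\|\Phi\|$ into quadratic terms which a Gronwall-in-expectation argument can handle. The Lipschitz regularity of $\sigma,\tilde b$, the moment estimates (\ref{estimate for x})--(\ref{estimate for phi}) and Lemma~\ref{Garsia} applied to $X$ then yield a bound of order $\Delta_N^{p\theta}$ for every $\theta<1$.

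For the Wong--Zakai step the key is to expose the It\^o--Stratonovich correction. On $[t_{k-1}^N,t_k^N]$ the piecewise-linear driver satisfies $dB^N(s)=(\Delta_N B_k/\Delta_N)\,ds$, and a first-order Taylor expansion of $\sigma(X^N(s))$ about $X^N(t_{k-1}^N)$ together with $dX^N=\sigma(X^N)dB^N+b(X^N)du+d\Phi^N$ produces, on each sub-interval,
\begin{align*}
\int_{t_{k-1}^N}^{t_k^N}\sigma(X^N(s))dB^N(s)=\sigma(X^N(t_{k-1}^N))\Delta_N B_k+\tfrac{1}{2}(D\sigma\cdot\sigma)(X^N(t_{k-1}^N))(\Delta_N B_k)^2+R_k,
\end{align*}
where $R_k$ collects the second-order Taylor remainder, cross terms of the form $\int(D\sigma)(X^N)b(X^N)du\cdot dB^N$ of order $\Delta_N^{3/2}$, and a reflection contribution $\int(D\sigma)(X^N)d\Phi^N\cdot dB^N$. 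Summed over $k$, the quadratic increments concentrate on $\tfrac12\tr(D\sigma\cdot\sigma)(X^N)\,dt$ at martingale rate $\Delta_N^{1/2}$, precisely accounting for the passage from $b$ to $\tilde b$; the remaining difference $X^N-\tilde X^N$ is then again estimated by It\^o's formula and condition (C), with $\sum R_k$ playing the role of a forcing term.

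The main obstacle is that the piecewise-linear driver $B^N$ has total variation of order $\sqrt{N}$, so neither $\|\Phi^N\|_{[0,T]}$ nor the crude estimate of Lemma~\ref{estimate for xi} is uniformly bounded in $N$. The remedy is to apply Lemma~\ref{Estimate for phi} on subintervals short enough that $\|B^N\|_{\infty}$ stays controlled with high probability (an exponential integrability furnished by Lemma~\ref{Garsia}), and to splice the resulting exponential moments through H\"older's inequality. The interplay between these exponential-moment losses, the $1/2$-H\"older behaviour of $\Gamma$, and the further H\"older splits needed to decouple polynomial and exponential factors in $R_k$ is what produces the final exponent $\theta/6$; as the authors remark, this rate is almost certainly not sharp.
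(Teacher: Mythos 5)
Your overall architecture coincides with the paper's: interpose the Euler--Peano approximation $X^N_E$ of SDE$(\sigma,\tilde b)$, prove the Euler--Peano step by It\^o's formula applied to $e^{-\frac{2}{\gamma}(f(X^N_E)+f(X))}|X^N_E-X|^2$ so that condition (C) absorbs the boundary terms, and prove the Wong--Zakai step by Taylor-expanding $\int\sigma(X^N)dB^N$ on each subinterval to expose the Stratonovich correction $\frac12\tr(D\sigma)(\sigma)$ plus a reflection remainder $\int(D\sigma)(X^N)d\Phi^N\cdot\frac{\Delta_N B_k}{\Delta_N}$. Your diagnosis of the total-variation obstruction is also essentially right, though the paper's resolution is cleaner than the one you sketch: $\|\Phi^N\|_{[0,T]}$ \emph{is} uniformly bounded in $L^{2p}$ (Lemma~\ref{Moment estimate for XN}), because Lemma~\ref{Estimate for phi} controls it by the oscillation and H\"older norm of the driver $Y^N$ (not of $B^N$), and $Y^N$ has Brownian-scale oscillation moments (Lemma~\ref{Osc for YN}) together with exponential integrability of $\|Y^N\|_{\infty,[0,T]}$; Lemma~\ref{estimate for xi} is only invoked on single subintervals, where $\|Y^N\|_{[t_{k-1},t_k]}\le C(|\Delta_N B_k|+\Delta_N)$.

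The genuine gap is in the final step. Because $X^N(t)$ for $t\in[t^N_{k-1},t^N_k]$ depends on $B(t^N_k)$, the integrands appearing in the Wong--Zakai It\^o expansion are not ${\cal F}_t$-adapted; the stochastic integrals have to be defined by integration by parts (the paper's Lemmas~\ref{Extended stochastic integral} and~\ref{Extended Ito formula}), and one can neither apply Burkholder--Davis--Gundy to the ``martingale'' parts to obtain a supremum estimate nor kill them by taking expectations inside the interval. What the forcing-term Gronwall recursion $a_k\le(1+C\Delta_N)a_{k-1}+b_k$ actually delivers is only $\sup_kE[|X^N(t^N_k)-X^N_E(t^N_k)|^2]\le C\Delta_N^{\theta/2}$, i.e.\ an $L^2$ bound at the partition points (Lemma~\ref{Difference of wong-zakai and euler 1}). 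To reach $E[\max_{0\le t\le T}|X^N(t)-X^N_E(t)|^{2p}]$ one still needs an interpolation: the paper introduces a coarser grid of $N_0$ points, bounds the supremum by the sum over the coarse grid plus the moduli of continuity of $X^N$ and $X^N_E$ on scale $T/N_0$, and optimizes $N_0\sim N^{1/6p}$ (Lemma~\ref{Difference of wong-zakai and euler 2}). It is precisely this interpolation --- not the ``H\"older splits needed to decouple polynomial and exponential factors in $R_k$'' --- that degrades the exponent from $\theta/2$ to $\theta/6$. Your proposal, as written, asserts the sup-$L^{2p}$ bound directly from the Gronwall argument and misattributes the source of the exponent, so this step must be supplied.
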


As we noted, although this estimate may not be good,
by this result and Borel-Cantelli lemma, we can conclude 
\begin{align}
\lim_{N\to\infty}\max_{0\le t\le T} 
|X^{2^N}(t)-X(t)|=0
\qquad \mbox{almost surely.}
\end{align}
In order to prove this theorem,
we need the Euler-Peano approximation of
the solution.
We explain the Euler-Peano approximation in the next Section.

\section{Euler-Peano approximation}

In this section, we consider the Euler-Peano approximation 
$X^N_E$ of
$X$.
For $0\le k\le N$, set $t^N_k=kT/N$.
Let us define $X^N_E(t)$~$(0\le t\le T)$
as the solution to the Skorohod problem
inductively which is given by $X^N_E(0)=x\in \RR^d$ and
\begin{align}
 X^N_E(t)&=X^N_E(t^N_{k-1})+\sigma(X^N_E(t^N_{k-1}))(B(t)-B(t^N_{k-1}))+
b(X^N_E(t))(t-t^N_k)\nonumber\\
& \quad+\Phi^N_E(t)-\Phi^N_E(t^N_{k-1})\qquad t^N_{k-1}\le t\le t^N_k.
\end{align}
In other words,
$X^N_E$ satisfies
\begin{align}
 X^N_E(t)&=x+\int_0^t\sigma(X^N_E(\pi_N(s)))dB(s)+\int_0^tb(X^N_E(\pi_N(s)))ds+
\Phi^N_E(t),
\end{align}
where $\pi_N(t)=\max\{t^N_k~|~t^N_k\le t\}$.
Define
\begin{align}
 Y^N_E(t)&=
x+\int_0^{t}\sigma(X^N_E(\pi_N(s)))dB(s)
+\int_0^{t}b(X^N_E(\pi_N(s)))ds.
\end{align}
Then by the definition of the solution of the SDE,
it holds that
\begin{align}
 X^N_E(t)=\Gamma\left(Y^N_E\right)(t).\label{skorohod map for euler}
\end{align}

We prove

\begin{thm}\label{Euler-Peano approximation}
Assume {\rm (A)}, {\rm (B)} and {\rm (C)}.
Then for any $p\ge 1$,
there exists $C_{p}>0$ such that
\begin{align}
 E\left[\max_{0\le t\le T}|X^N_E(t)-X(t)|^{2p}\right]
\le C_{p}\Delta_N^{p}.
\end{align}
\end{thm}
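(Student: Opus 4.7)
The idea is to compare $X^N_E(t)$ and $X(t)$ pathwise by applying It\^o's formula to a carefully weighted version of $|X^N_E(t)-X(t)|^2$, using condition (C) to cancel the reflection terms. Write $Z(t)=X^N_E(t)-X(t)$ and $e_N(s)=X^N_E(s)-X^N_E(\pi_N(s))$, so that $Z(0)=0$ and
\begin{equation*}
dZ(s)=\bigl[\sigma(X^N_E(\pi_N(s)))-\sigma(X(s))\bigr]dB(s)+\bigl[b(X^N_E(\pi_N(s)))-b(X(s))\bigr]ds+d\Phi^N_E(s)-d\Phi(s).
\end{equation*}
By the Lipschitz continuity of $\sigma$ and $b$, the stochastic and drift integrands are bounded in norm by a constant times $|Z(s)|+|e_N(s)|$, so the only awkward part of $dZ$ is the reflection difference $d\Phi^N_E-d\Phi$, whose sign is uncontrolled in general.

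To neutralise the reflection terms, introduce the weight
\begin{equation*}
\psi(t)=\exp\!\Bigl(-\tfrac{2}{\gamma}\bigl(f(X(t))+f(X^N_E(t))\bigr)\Bigr),
\end{equation*}
where $f$ and $\gamma$ come from condition (C). Since $f\in C^2_b$, one has $0<c\le\psi(t)\le c^{-1}$ uniformly. Apply It\^o to $V(t)=|Z(t)|^2\psi(t)$. The reflection contribution of $\psi\,d|Z|^2$ equals $-2\psi(Z,d\Phi)+2\psi(Z,d\Phi^N_E)$; condition (C), applied with $x=X(s)$, $y=X^N_E(s)$, $\mathbf n=\mathbf n(s)\in\mathcal N_{X(s)}$, gives
\begin{equation*}
-2\psi(Z,d\Phi)\le\tfrac{2}{\gamma}\psi|Z|^2(Df(X),d\Phi),
\end{equation*}
and symmetrically $2\psi(Z,d\Phi^N_E)\le\tfrac{2}{\gamma}\psi|Z|^2(Df(X^N_E),d\Phi^N_E)$. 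On the other hand, the reflection contributions of $|Z|^2\,d\psi$ (via It\^o on $f(X)$ and $f(X^N_E)$) are exactly $-\tfrac{2}{\gamma}|Z|^2\psi\bigl[(Df(X),d\Phi)+(Df(X^N_E),d\Phi^N_E)\bigr]$. The two contributions cancel, so the total reflection part of $dV$ is non-positive.

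What remains in $dV$ is a local martingale $M_t$ plus drift and quadratic-variation terms pathwise bounded by $C(V(s)+|e_N(s)|^2)\,ds$, where $C$ depends only on the Lipschitz and sup norms of $\sigma,b,f,Df,D^2f$ (using the boundedness of $\sigma$ and $b$). Since $V(0)=0$, a standard stochastic Gronwall argument (take $p$-th moments, use Burkholder--Davis--Gundy on $M_t$, estimate $\langle M\rangle_t\le C\int_0^t(V(s)+|e_N(s)|^2)ds$ times bounded coefficients, and absorb the resulting $V$ terms) yields
\begin{equation*}
E\!\left[\sup_{0\le t\le T}V(t)^p\right]\le C_{p,T}\,E\!\left[\sup_{0\le s\le T}|e_N(s)|^{2p}\right],
\end{equation*}
and hence the same bound for $E[\sup_t|Z(t)|^{2p}]$ since $\psi^{-1}$ is bounded.

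It remains to show $E[\sup_{s\le T}|e_N(s)|^{2p}]\le C_p\Delta_N^p$. Fix $k$. On $[t^N_{k-1},t^N_k]$ the Euler-Peano driving path
\begin{equation*}
Y^N_E(s)-Y^N_E(t^N_{k-1})=\sigma(X^N_E(t^N_{k-1}))(B(s)-B(t^N_{k-1}))+b(X^N_E(t^N_{k-1}))(s-t^N_{k-1})
\end{equation*}
has oscillation of order $\Delta_N^{1/2}$ in every $L^q$ by Burkholder--Davis--Gundy. Lemma~\ref{Estimate for phi} applied on $[t^N_{k-1},t^N_k]$ then controls $\|\Phi^N_E\|_{[t^N_{k-1},t^N_k]}$ by this oscillation multiplied by an exponential factor of $\|Y^N_E\|_{\infty}$ and a polynomial in the local H\"older seminorm of $Y^N_E$, both of which have ample moments by Lemma~\ref{Garsia}. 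Combining $|e_N(s)|\le\|Y^N_E\|_{\infty,[t^N_{k-1},s]}+\|\Phi^N_E\|_{[t^N_{k-1},s]}$ with a Cauchy--Schwarz split of these two factors, and then a union bound (with extra moments) over $k\le N$, gives the desired $O(\Delta_N^p)$ estimate. The conceptually hardest step is the cancellation via the $\psi$-weight and condition (C); once this is in place, the remaining arguments are routine SDE estimates.
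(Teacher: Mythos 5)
Your proposal follows essentially the same route as the paper's proof: the same exponential weight $e^{-\frac{2}{\gamma}(f(X^N_E(t))+f(X(t)))}$ applied to $|X^N_E(t)-X(t)|^2$, the same cancellation of the reflection terms via condition (C), and the same Burkholder--Davis--Gundy plus Gronwall argument. One intermediate claim, however, is not correct as stated: $E[\sup_{s\le T}|e_N(s)|^{2p}]\le C_p\Delta_N^p$ fails in general, since $\sup_s|e_N(s)|$ dominates $c\max_{k\le N}|\Delta_N B_k|$ when $\sigma$ is non-degenerate, which is of order $\sqrt{\Delta_N\log N}$; your union bound "with extra moments" only yields $C_{p,\ep}\Delta_N^{p-\ep}$. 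This is harmless because the Gronwall step only requires the pointwise-in-time bound $\sup_{0\le s\le T}E[|e_N(s)|^{2p}]\le C_p\Delta_N^{p}$ (the inhomogeneous term enters as $\int_0^{T'}E[|e_N(s)|^{2p}]\,ds$), which is exactly what the paper uses; you should state the estimate in that weaker form rather than with the supremum inside the expectation.
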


This estimate was already proved 
in \cite{slominski1} for general convex domains under the conditions
that $\sigma$ and $b$
are bounded and global Lipschitz continuous.
Also the readers may find a result of local version of Euler-Peano
and Euler approximation under the conditions (A) and (B) only
in that paper.

To prove this theorem, we need the following lemma.

\begin{lem}\label{Moment estimate for XNE}
Assume {\rm (A)} and {\rm (B)}. Let $p\ge 1$.
There exists a positive constant $C_p$ which is independent of
$N$ such that
\begin{align}
E[\|X^N_E\|_{\infty,[s,t]}^{2p}]&
\le C_p|t-s|^p,\label{estimate for xe}\\
 E\left[\|\Phi^N_E\|_{[s,t]}^{2p}\right]&\le C_p|t-s|^p.\label{estimate for phie}
\end{align}
\end{lem}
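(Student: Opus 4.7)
The strategy is to write $X^N_E=\Gamma(Y^N_E)$ and apply Lemma~\ref{Estimate for phi} with $w=Y^N_E$ for some fixed $0<\theta<1/2$, then estimate each of the three factors on the right-hand side separately. Since $X^N_E(t)-X^N_E(s)=(Y^N_E(t)-Y^N_E(s))+(\Phi^N_E(t)-\Phi^N_E(s))$, the oscillation bound (\ref{estimate for xe}) will follow from (\ref{estimate for phie}) together with a direct estimate on the oscillation of $Y^N_E$, so the heart of the matter is (\ref{estimate for phie}).

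First I establish three a priori estimates on $Y^N_E(t)=x+\int_0^t\sigma(X^N_E(\pi_N(s)))dB(s)+\int_0^tb(X^N_E(\pi_N(s)))ds$, each uniform in $N$. Since $\sigma$ and $b$ are bounded, the quadratic variation of the martingale part is dominated by $\|\sigma\|_\infty^2(t-s)$, so Burkholder--Davis--Gundy, together with the trivial $\|b\|_\infty(t-s)$ bound on the drift, yields $E[\|Y^N_E\|_{\infty,[s,t]}^{q}]\le C_q|t-s|^{q/2}$ for every $q\ge 1$ and every $0\le s<t\le T$. Combining this with Lemma~\ref{Garsia} gives $E[\|Y^N_E\|_{{\cal H},[0,T],\theta}^{q}]\le C_{q,\theta}$ for every $q\ge 1$ and every $0<\theta<1/2$, uniformly in $N$. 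Finally, since the quadratic variation of the martingale part is deterministically bounded by $\|\sigma\|_\infty^2 T$, the exponential martingale inequality applied to Doob-type bounds gives sub-Gaussian tails for its supremum, so $E[\exp(\lambda\|Y^N_E\|_{\infty,[0,T]})]\le C_\lambda$ for every $\lambda>0$, uniformly in $N$.

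Raising the inequality of Lemma~\ref{Estimate for phi} to the $2p$-th power and applying H\"older's inequality with three exponents equal to $3$ gives
\begin{equation*}
E[\|\Phi^N_E\|_{[s,t]}^{2p}]\le C_p\, A_1^{1/3}A_2^{1/3}A_3^{1/3},
\end{equation*}
where $A_1=E[(1+\|Y^N_E\|_{{\cal H},[0,T],\theta}^{C_2}T)^{6p}]$, $A_2=E[\exp(6pC_3\|Y^N_E\|_{\infty,[0,T]})]$, and $A_3=E[\|Y^N_E\|_{\infty,[s,t]}^{6p}]$. By the second and third a priori estimates above, $A_1$ and $A_2$ are bounded by constants independent of $s,t,N$; by the first a priori estimate with $q=6p$, $A_3\le C_p(t-s)^{3p}$. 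Combining yields (\ref{estimate for phie}), and then (\ref{estimate for xe}) is immediate from the triangle inequality $\|X^N_E\|_{\infty,[s,t]}\le \|Y^N_E\|_{\infty,[s,t]}+\|\Phi^N_E\|_{[s,t]}$.

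The main technical issue is the exponential factor $e^{C_3\|w\|_{\infty,[s,t]}}$ in Lemma~\ref{Estimate for phi}, which does not shrink as $t-s\to 0$ and therefore cannot contribute to the correct $|t-s|^p$ order; the only way to handle it is to absorb it, via H\"older, as a uniformly bounded multiplicative constant. This is precisely where the boundedness of $\sigma$ is used, since it is what guarantees that the quadratic variation of the martingale component of $Y^N_E$ is deterministically bounded and hence that the relevant exponential moments are finite uniformly in $N$.
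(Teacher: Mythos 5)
Your argument is correct and follows essentially the same route as the paper: reduce to the bound on $\|\Phi^N_E\|_{[s,t]}$ via $X^N_E=\Gamma(Y^N_E)$, apply Lemma~\ref{Estimate for phi}, and control the three factors by Burkholder--Davis--Gundy, Lemma~\ref{Garsia}, and the uniform exponential integrability of the martingale part coming from the boundedness of $\sigma$. The paper's proof is just a terser statement of exactly this H\"older-inequality argument.
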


\begin{proof}
 It suffices to prove (\ref{estimate for phie}).
Since $M^N_E(t)=\int_0^t\sigma(X^N_E(\pi_N(s)))dB(s)$ is a martingale whose
quadratic variation is uniformly bounded for $N$,
we see that
$$
\sup_NE[\exp(a\max_{0\le t\le T}|M^N_E(t)|)]<\infty
$$ for all
$a>0$.
Thus by
Lemma~\ref{Estimate for phi},
Lemma~\ref{Garsia}
and Burkholder-Davis-Gundy's inequality, we complete the proof.
\end{proof}

\begin{proof}[Proof of Theorem~$\ref{Euler-Peano approximation}$]
The following proof is a modification of
that of Lemma~3.1 in \cite{lions-sznitman}.
Note that we need just Lipschitz 
continuity of $\sigma$ and $b$ and their boundedness 
in the proof below.
It suffices to prove the case where $p\ge 2$.
 Define
\begin{align*}
Z^N(t)&=X^N_E(t)-X(t),\\
\mu_N(t)&=e^{-\frac{2}{\gamma}(f(X^N_E(t)+f(X(t))))},\\
k_N(t)&=\mu_N(t)|Z^N(t)|^2.
\end{align*}
Then we have
\begin{align}
\lefteqn{d k_N(t)}\nonumber\\
&=
\mu_N(t)\Biggl\{
2\left(Z^N(t),(\sigma(X^N_E(\pi_N(t)))
-\sigma(X(t)))dB(t)\right)\nonumber\\
& \quad +
2\left(Z^N(t),b(X^N_E(\pi_N(t)))-b(X(t))\right)dt\nonumber\\
& \quad +
\tr\left(({}^t\sigma\sigma)(X^N_E(\pi_N(t)))\right)dt
+\tr\left(({}^t\sigma\sigma)(X(t))\right)dt\nonumber\\
& \quad
-\tr\left(({}^t\sigma(X(t))\sigma(X^N_E(\pi_N(t))))\right)
-\tr\left(({}^t\sigma(X^N_E(\pi_N(t)))\sigma(X(t)))\right)
dt\Biggr\}\nonumber\\
&\quad +
2\mu_N(t)\left(Z^N(t),d\Phi^N_E(t)-d\Phi(t)\right)\nonumber\\
&\quad
-\frac{2\mu_N(t)}{\gamma}
\left|Z^N(t)\right|^2\Bigl\{
\left((D f)(X^N_E(t)),d\Phi^N_E(t)\right)
+
\left((D f)(X(t)),d\Phi(t)\right)
\Bigr\}\nonumber\\
&\quad-\frac{2\mu_N(t)}{\gamma}
\left|Z^N(t)\right|^2\Bigl\{
\left((D f)(X^N_E(t)),\sigma(X^N_E(\pi_N(t)))dB(t)\right)
+
\left((D f)(X(t)),\sigma(X(t))dB(t)\right)
\Bigr\}\nonumber\\
&\quad +R_N(t)dt, \label{kNt}
\end{align}
where
\begin{align}
R(t)
&=\frac{4\mu_N(t)}{\gamma}
\left((D f)(X^N_E(t)),\sigma(X^N_E(\pi_N(t)))
{}^t\left(\sigma(X(t))-\sigma(X^N_E(\pi_N(t)))\right)
\left(Z^N(t)\right)\right)dt\nonumber\\
& \quad +\frac{4\mu_N(t)}{\gamma}
\left((D f)(X(t)),\sigma(X(t))
{}^t\left(\sigma(X^N_E(\pi_N(t)))-\sigma(X(t))\right)
\left(Z^N(t)\right)\right)dt\nonumber\\
& \quad -\frac{2\mu_N(t)}{\gamma}|Z^N(t)|^2
\left(\left((D f)(X^N_E(t)),{b}(X^N_E(\pi_N(t)))\right)dt+
\left((D f)(X(t)),{b}(X(t))\right)dt
\right)\nonumber\\
& \quad -\frac{\mu_N(t)}{\gamma}|Z^N(t)|^2
\Bigl\{\tr(D^2f)(X^N_E(t))(\sigma(X^N_E(\pi_N(t))\cdot,
\sigma(X^N_E(\pi_N(t)))))
\nonumber\\
& \quad \quad\quad \quad\qquad\quad +
\tr(D^2f)(X(t))(\sigma(X(t)\cdot,\sigma(X(t))\cdot))\Bigr\}dt
\nonumber\\
& \quad +\frac{2\mu_N(t)}{\gamma^2}
|(D f)(X^N_E(t))(\sigma(X^N_E(\pi_N(t))))
+(D f)(X(t))(\sigma(X(t)))
|^2
|Z^N(t)|^2dt.
\end{align}
Note that by condition (C),
\begin{align}
\lefteqn{\left(X^N_E(t)-X(t),d\Phi^N_E(t)-d\Phi(t)\right)}\nonumber\\
&\qquad
-\frac{1}{\gamma}
\left|X^N_E(t)-X(t)\right|^2\Bigl\{
\left((D f)(X^N_E(t)),d\Phi^N_E(t)\right)
+
\left((D f)(X(t)),d\Phi(t)\right)
\Bigr\} \le 0
\end{align}
and $\sup_{0\le t\le T}E[|X^N_E(t)-X^N_E(\pi_N(t))|^{p}]\le
 C\Delta_N^{p/2}$.
As for the first term on the RHS of (\ref{kNt}),
using Bukholder-Davis-Gundy's inequality,
we get for any $0\le T'\le T$,
\begin{align}
\lefteqn{E\left[\sup_{0\le t\le T'}
\left|\int_0^t\mu_N(s)
\left(X^N_E(s)-X(s),
\sigma(X^N_E(\pi_N(s)))-\sigma(X(s))dB(s)
\right)\right|^p\right]}\nonumber\\
&\le
CE\left[\left(\int_0^{T'}|X^N_E(t)-X(t)|^4dt\right)^{p/2}\right]
+CE\left[
\left(\int_0^{T'}
|X^N_E(\pi_N(t))-X^N_E(t)|^4dt\right)^{p/2}
\right]\nonumber\\
&\le C_TE\left[\int_0^{T'}k_N(t)^pdt\right]+
C_TE\left[\int_0^{T'}|X^N_E(\pi_N(t))-X^N_E(t)|^{2p}dt\right]\nonumber\\
&\le C_T\int_0^{T'}E[k_N(t)^p]dt+
C_T\Delta_N^p.
\end{align}
We can estimate the other terms similarly and
we obtain
\begin{align}
E\left[\sup_{0\le t\le T'}k_N(t)^p\right]
&\le C_T\Delta_N^p+C_T\int_0^{T'}E\left[\sup_{0\le s\le t}k_N(s)^p\right]dt.
\end{align}
By the Gronwall inequality,
this implies the desired estimate.
\end{proof}

\section{Proof of main theorem}\label{Proof of main theorem}

First, we prove the existence and uniqueness
of the solution to reflecting ODE driven by a
continuous bounded variation path.

\begin{pro}\label{Reflecting ODE}
Assume the conditions {\rm (A)} and {\rm (B)} hold.
 Let $w=w(t)$~$(0\le t\le T)$ be a continuous bounded variation path on $\RR^n$.
Then there exists a unique continuous bounded variation 
path $x(t)$ on $\RR^d$ satisfying the reflecting ODE:
\begin{align}
 x(t)&=x+\int_0^t\sigma(x(s))dw(s)+\int_0^tb(x(s))ds+\Phi(t),
\qquad 0\le t\le T.
\end{align}
\end{pro}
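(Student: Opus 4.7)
The plan is to construct a solution by a time-discrete Euler approximation and to establish uniqueness via a Stieltjes analogue of the Tanaka estimate used in the proof of Theorem~\ref{Euler-Peano approximation}. I partition $[0,T]$ by $t_k^N=kT/N$, set $x_N(0)=x$, and on $[t_{k-1}^N,t_k^N]$ freeze the coefficients at $x_N(t_{k-1}^N)$ to form the continuous bounded variation path
\[
y_N(t)=x_N(t_{k-1}^N)+\sigma(x_N(t_{k-1}^N))(w(t)-w(t_{k-1}^N))+b(x_N(t_{k-1}^N))(t-t_{k-1}^N).
\]
Theorem~\ref{Existence of Skorohod problem} provides a unique Skorohod solution on this subinterval, so I set $x_N=\Gamma(y_N)$ and iterate over $k$. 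Summing Lemma~\ref{estimate for xi} over the subintervals gives the uniform bound $\|x_N\|_{[0,T]}\le 2(\sqrt{2}+1)(\|\sigma\|_\infty\|w\|_{[0,T]}+\|b\|_\infty T)$, and combining this with the uniform continuity of $w$ yields equicontinuity and uniform boundedness of $\{x_N\}$.

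\noindent Arzel\`a-Ascoli then produces a uniformly convergent subsequence $x_{N_j}\to x^*$ in $C([0,T];\bar D)$. Since $\pi_{N_j}(s)\to s$ and $x^*$ is continuous, $x_{N_j}\circ\pi_{N_j}\to x^*$ uniformly, so by continuity of $\sigma,b$ and dominated convergence for Stieltjes integrals against $dw$ (whose total variation is finite),
\[
y_{N_j}\longrightarrow y^*(t):=x+\int_0^t\sigma(x^*(s))\,dw(s)+\int_0^t b(x^*(s))\,ds
\]
uniformly on $[0,T]$. Continuity of the Skorohod map (Theorem~\ref{Existence of Skorohod problem}) gives $x^*=\Gamma(y^*)$, so $x^*$ is a solution of the reflecting ODE and $\Phi^*=x^*-y^*$ is the associated reflecting term.

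\noindent For uniqueness, let $x_1,x_2$ be two solutions, set $Z=x_1-x_2$ and $\mu(t)=\exp(-\tfrac{2}{\gamma}(f(x_1(t))+f(x_2(t))))$. Because the $x_i$ are continuous BV, the Stieltjes chain rule and integration by parts (no quadratic variation contributions) produce an identity for $d(\mu|Z|^2)$ exactly analogous to the one in the proof of Theorem~\ref{Euler-Peano approximation}, but without the It\^o correction. Condition~(C) applied to $(x_1(s),x_2(s),\mathbf n_1(s))$ and symmetrically to $(x_2(s),x_1(s),\mathbf n_2(s))$ shows that the combined reflection contribution
\[
2(Z,d\Phi_1-d\Phi_2)-\tfrac{2}{\gamma}|Z|^2\bigl\{(Df(x_1),d\Phi_1)+(Df(x_2),d\Phi_2)\bigr\}
\]
is nonpositive, while the remaining $dw$ and $dt$ terms are majorized by $C\mu|Z|^2\,d\nu(s)$ with $\nu(t)=\|w\|_{[0,t]}+t$, using Lipschitz bounds on $\sigma,b$ and boundedness of $\sigma,b,Df$. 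Gronwall's inequality in Stieltjes form, together with $Z(0)=0$, forces $Z\equiv 0$. Uniqueness then promotes the subsequential limit above to convergence of the full Euler sequence $x_N\to x^*$.

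\noindent The main technical nuisance I expect is the passage to the limit in the nonlinear Stieltjes integral $\int_0^t\sigma(x_{N_j}(\pi_{N_j}(s)))\,dw(s)\to\int_0^t\sigma(x^*(s))\,dw(s)$: it needs uniform convergence of $x_{N_j}\circ\pi_{N_j}$ to $x^*$ (for which the continuity of $x^*$ is essential) combined with uniform continuity of $\sigma$ on a bounded set. Everything else is Stieltjes-calculus bookkeeping that mirrors the SDE arguments from the proof of Theorem~\ref{Euler-Peano approximation}, but simplified by the absence of It\^o terms.
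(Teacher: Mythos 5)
Your existence argument is essentially the paper's: Euler iteration with frozen coefficients, a compactness step, and the continuity of the Skorohod map from Theorem~\ref{Existence of Skorohod problem} to identify the limit. The only cosmetic difference is that the paper applies Arzel\`a--Ascoli to the free paths $y^N$ (whose equicontinuity is immediate from the boundedness of $\sigma,b$ and $\|y^N\|_{[s,t]}\le \|\sigma\|_\infty\|w\|_{[s,t]}+\|b\|_\infty(t-s)$) and then transfers compactness to $x^N=\Gamma(y^N)$, whereas you extract it from $x_N$ directly via Lemma~\ref{estimate for xi}; both work, and your passage to the limit in the Stieltjes integrals is handled correctly.

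There is, however, a genuine gap in your uniqueness step: you invoke condition (C), with its function $f$ and constant $\gamma$, to kill the reflection terms, but the proposition assumes only (A) and (B) --- condition (C) is not available here. The paper's proof delegates uniqueness to Saisho's Theorem 5.1, which works under (A) and (B) alone. The fix is to replace your condition-(C) inequality by the consequence of the uniform exterior sphere condition (A): for $x\in\partial D$, ${\mathbf n}\in{\cal N}_x={\cal N}_{x,r_0}$ and $y\in\bar D$ one has $|y-x+r_0{\mathbf n}|\ge r_0$, hence $(y-x,{\mathbf n})\ge -|y-x|^2/(2r_0)$. This gives
\begin{align*}
\left(x_1(s)-x_2(s),\,d\Phi_1(s)-d\Phi_2(s)\right)\le \frac{1}{2r_0}\,|x_1(s)-x_2(s)|^2\left(d\|\Phi_1\|_{[0,s]}+d\|\Phi_2\|_{[0,s]}\right),
\end{align*}
so no exponential weight $\mu$ is needed; one runs Gronwall in the finite measure $d\|\Phi_1\|_{[0,s]}+d\|\Phi_2\|_{[0,s]}+d\|w\|_{[0,s]}+ds$ (finiteness of $\|\Phi_i\|_{[0,T]}$ follows from Lemma~\ref{Estimate for phi} or Lemma~\ref{estimate for xi}, since $w$ is continuous BV), using the Lipschitz continuity of $\sigma$ and $b$ for the drift and $dw$ terms exactly as you propose. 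With that substitution your argument is complete and matches the intent of the paper's reference to Saisho.
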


\begin{proof}
The following proof is a modification of
the proof of Theorem 5.1 in \cite{saisho}.
Note that the boundedness and the continuity of
$\sigma$ and $b$ are sufficient for the existence of the solutions.
Let us consider the partition of
$[0,T]$ by
$t^N_k=kT/N$.
 Let $x^N$ be the Euler-Peano approximation of the solution,
that is, let us define $x^N$ as the solution of
the Skorohod problem with $x^N(0)=x$:
\begin{align}
 x^N(t)&=x^N(t^N_{k-1})+\sigma(x^N(t^N_{k-1}))(w(t)-w(t^N_{k-1}))\nonumber\\
&\qquad +b(x^N(t^N_{k-1}))(t-t^N_{k-1})+\Phi^N(t)-\Phi^N(t^N_{k-1})
\qquad t^N_{k-1}\le t\le t^N_k.
\end{align}
Let
\begin{align}
 y^N(t)&=x+\int_0^t\sigma(x^N(\pi_N(s))dw(s)+\int_0^tb(x^N(\pi_N(s)))ds
\label{reflecting ode for yN}
\end{align}
Then $\{y^N\}$ is a family of uniformly bounded equicontinuous paths 
defined on $[0,T]$ with values in $\RR^d$.
Therefore by the Arzela-Ascoli theorem,
there exists a subsequence $\{y^{N_k}\}$ which converges in
the uniform convergence topology.
We denote the limit by $y^{\infty}$.
Then by the continuity of the Skorohod map in 
Theorem~\ref{Existence of Skorohod problem},
$x^{N_k}(=\Gamma(y^{N_k}))$, $\Phi^{N_k}(=L(y^{N_k})$
also converges to a continuous paths, say,
$x^{\infty}$, $\Phi^{\infty}$, in uniform convergence topology.
Clearly, the pair
$(x^{\infty},\Phi^{\infty})$ is a solution of
a Skorohod problem associated with $y^{\infty}$.
Taking the limit $N_{k}\to\infty$ in (\ref{reflecting ode for yN}),
we have
\begin{align}
 y^{\infty}(t)&=x+\int_0^t\sigma(x^{\infty}(s))dw(s)+\int_0^tb(x^{\infty}(s)))ds.
\label{reflecting ode for yinfty}
\end{align}
This shows that $(x^{\infty}, \Phi^{\infty})$
is a solution of the reflecting ODE.
We can check the uniqueness in a similar manner to 
Theorem 5.1 in \cite{saisho}.
Note that the boundedness of $\sigma$ and $b$ and
their Lipschitz continuity are sufficient for the proof.
\end{proof}

\begin{rem}{\rm
 We may prove the existence of the solution of
reflecting ODE when the driving path is just $p$-variation path,
where $1\le p<2$ using Davie's argument~\cite{davie}.
We will study this problem hopefully together with more general rough
differential equation corresponding to the case of
$p\ge 2$ in future's paper.}
\end{rem}
From now on, for simplicity, we may denote
$\Delta_N B_k$, $\Delta_N$, $t^N_k$ by
$\Delta B_k$, $\Delta$, $t_k$.
By the definition, it holds that
\begin{align}
 X^N(t)&=X^N(t_{k-1})+\int_{t_{k-1}}^t\sigma(X^N(s))\frac{\Delta
 B_k}{\Delta}ds+
\int_{t_{k-1}}^tb(X^N(s))ds\\
& \quad +\Phi^N(t)-\Phi^N(t_{k-1})
\quad \quad t_{k-1}\le t\le t_{k}.
\end{align}
Clearly, $X^N(t_{k-1})$ is
${\cal F}_{t_{k-1}}$-measurable.
Let
\begin{equation}
 Y^N(t)=x+\int_0^t\sigma(X^N(s))dB^{N}(s)+\int_0^tb(X^N(s))ds.
\end{equation}
Then $X^{N}=\Gamma(Y^{N})$ and $\Phi^N=L(Y^N)$.

\begin{lem}\label{Estimate for YN}
Assume {\rm (A)} and {\rm (B)}.
 Fix $N\in {\mathbb N}$.
Let $t_{k-1}\le s\le t\le t_k$.
The constant $C$ below is independent of
$t,s,k,N$.

\noindent
$(1)$ The following relations hold.
\begin{align}
Y^N(t)-Y^N(t_{k-1})
&=
\int_{t_{k-1}}^t\sigma(X^N(s))\frac{\Delta
 B_k}{\Delta}ds+
\int_{t_{k-1}}^tb(X^N(s))ds\label{YNt-YN0}
\end{align}
and
\begin{align}
 |Y^N(t)-Y^N(s)|&\le C\left(|\Delta B_k|\frac{t-s}{\Delta}+t-s\right)
\label{estimate for YN}\\
\|\Phi^N\|_{[s,t]}&\le C\left(|\Delta B_k|\frac{t-s}{\Delta}+t-s\right).
\label{estimate for PhiN}
\end{align}

\noindent
$(2)$ We have
\begin{align}
\int_{t_{k-1}}^t\sigma(X^N(s))\frac{\Delta
 B_k}{\Delta}ds
&=\sigma(X^N(t_{k-1}))\frac{\Delta B_k}{\Delta}(t-t_{k-1})\nonumber\\
&\quad +
\int_{t_{k-1}}^t\left(\int_{t_{k-1}}^s(D\sigma)(X^N(r))\sigma(X^N(r))
\frac{\Delta B_k}{\Delta}dr\right)\frac{\Delta B_k}{\Delta}ds\nonumber\\
&\quad
 +\int_{t_{k-1}}^t\left(\int_{t_{k-1}}^s(D\sigma)(X^N(r))(b(X^N(r)))dr\right)
\frac{\Delta B_k}{\Delta}ds\nonumber\\
&\quad
+\int_{t_{k-1}}^t\left(\int_{t_{k-1}}^s(D\sigma)(X^N(r))d\Phi^N(r)\right)
\frac{\Delta B_k}{\Delta}
ds\nonumber\\
&=I_0^k(t)+I_1^k(t)+I_2^k(t)
+I_3^k(t).\label{YNt-YN}
\end{align}
Let $I_4^k(t)=\int_{t_{k-1}}^tb(X^N(s))ds$.
Then
\begin{align}
 |I_1^k(t)|&\le C |\Delta B_k|^2
\frac{(t-t_{k-1})^2}{\Delta^2},\label{I1k}\\
|I_2^k(t)|&\le C|\Delta B_k|\frac{(t-t_{k-1})^2}{\Delta},\label{I2k}\\
|I_3^k(t)|&\le C\left(|\Delta B_k|^2\left(\frac{t-t_{k-1}}{\Delta}\right)^2+
\frac{(t-t_{k-1})^2}{\Delta}|\Delta B_k|\right),\label{I3k}\\
|I_4^k(t)|&\le C(t-t_{k-1}).\label{I4k}
\end{align}
\end{lem}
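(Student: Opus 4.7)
The plan is to handle Part~(1) directly from definitions and then bootstrap Part~(2) off the estimate (\ref{estimate for PhiN}). First I would read off the identity (\ref{YNt-YN0}) from the construction of $Y^N$ together with the piecewise linearity of $B^N$: on $[t_{k-1},t_k]$ one has $dB^N(s)=\frac{\Delta B_k}{\Delta}ds$, so the stochastic integral reduces to an ordinary Lebesgue integral. The pointwise bound (\ref{estimate for YN}) is then an immediate consequence of the identity and the global bounds $\|\sigma\|_\infty,\|b\|_\infty<\infty$ coming from $\sigma\in C^2_b$, $b\in C^1_b$. For (\ref{estimate for PhiN}), I would observe that on the sub-interval $[s,t]\subset[t_{k-1},t_k]$ the pair $(X^N(\cdot),\Phi^N(\cdot)-\Phi^N(s))$ is itself a solution of a Skorohod problem on $[s,t]$ driven by the continuous bounded-variation path $r\mapsto Y^N(r)-Y^N(s)+X^N(s)$. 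Lemma~\ref{estimate for xi} then gives $\|X^N\|_{[s,t]}\le 2(\sqrt 2+1)\|Y^N\|_{[s,t]}$, and since $\|\Phi^N\|_{[s,t]}\le\|X^N\|_{[s,t]}+\|Y^N\|_{[s,t]}$, one obtains (\ref{estimate for PhiN}) after invoking (\ref{estimate for YN}) to bound $\|Y^N\|_{[s,t]}$.

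For Part~(2) the approach is to apply the classical chain rule to $\sigma\in C^2_b$ along the bounded-variation path $X^N$ on $[t_{k-1},t_k]$, which yields
\[
\sigma(X^N(s))=\sigma(X^N(t_{k-1}))+\int_{t_{k-1}}^s(D\sigma)(X^N(r))\,dX^N(r),
\]
and then to split $dX^N(r)$ into its three constituent pieces using (\ref{reflecting ode}): the Brownian term $\sigma(X^N(r))\frac{\Delta B_k}{\Delta}dr$, the drift $b(X^N(r))dr$, and the reflecting term $d\Phi^N(r)$. Inserting this expansion into $\int_{t_{k-1}}^t\sigma(X^N(s))\frac{\Delta B_k}{\Delta}ds$ and multiplying out produces exactly the four-term decomposition (\ref{YNt-YN}).

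The bounds (\ref{I1k}), (\ref{I2k}) and (\ref{I4k}) are then a matter of two nested integrations in which one simply inserts $|D\sigma|,|\sigma|,|b|\le C$. The only term that is not immediate is $I_3^k$: the inner integral is controlled by $C\|\Phi^N\|_{[t_{k-1},s]}$, and plugging in (\ref{estimate for PhiN}) already established above gives $\|\Phi^N\|_{[t_{k-1},s]}\le C(|\Delta B_k|(s-t_{k-1})/\Delta+(s-t_{k-1}))$; a final integration against $|\Delta B_k|/\Delta\,ds$ on $[t_{k-1},t]$ then yields (\ref{I3k}). The main conceptual step, and the only one that is not pure bookkeeping, is the localisation of Lemma~\ref{estimate for xi} to a sub-interval of $[t_{k-1},t_k]$; once this is justified via uniqueness of the Skorohod problem (Theorem~\ref{Existence of Skorohod problem}), the estimate (\ref{estimate for PhiN}) becomes available both as a standalone conclusion and as the crucial input for bounding $I_3^k$.
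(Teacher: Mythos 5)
Your proposal is correct and follows essentially the same route as the paper's (very terse) proof: the identities by direct computation and the chain rule along the bounded-variation path $X^N$, the bound (\ref{estimate for PhiN}) via (\ref{estimate for YN}) combined with Lemma~\ref{estimate for xi}, and $I_3^k$ controlled through $\|\Phi^N\|_{[t_{k-1},t]}$ using (\ref{estimate for PhiN}). The only cosmetic remark is that the input to Lemma~\ref{estimate for xi} is the total variation $\|Y^N\|_{[s,t]}$ rather than the pointwise increment in (\ref{estimate for YN}), but the same bound holds for it since the integrand in (\ref{YNt-YN0}) is bounded by $C(|\Delta B_k|/\Delta+1)$.
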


\begin{proof}
The proof of the equation (\ref{YNt-YN0}) and
(\ref{YNt-YN}) is a simple calculation.
The estimate in (\ref{estimate for YN}) follows from 
(\ref{YNt-YN0}).
Hence the estimate (\ref{estimate for PhiN})
follows from this estimate and Lemma~\ref{estimate for xi}.
By the boundedness of $\sigma,D\sigma, b$, we get
(\ref{I1k}), (\ref{I2k}), (\ref{I4k}).
Using (\ref{estimate for PhiN}),
\begin{align}
 |I_3^k(t)|&\le
C\|\Phi^N\|_{[t_{k-1},t]}\frac{(t-t_{k-1})|\Delta
 B_k|}{\Delta}\nonumber\\
&\le C\left(|\Delta B_k|^2\left(\frac{t-t_{k-1}}{\Delta}\right)^2+
\frac{(t-t_{k-1})^2}{\Delta}|\Delta B_k|\right).
\end{align}
This complete the proof.
\end{proof}

\begin{lem}\label{Osc for YN}
Assume {\rm (A)} and {\rm (B)}.
Let $p\ge 1$.
There exists a positive constant $C_p$ which is independent of
$N$ such that for all $0\le s\le t\le T$,
\begin{align}
 E[\|Y^N\|_{\infty,[s,t]}^{2p}]\le C_p|t-s|^p.
\end{align}
\end{lem}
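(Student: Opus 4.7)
\emph{Approach.} Since $\|Y^N\|_{\infty,[s,t]}\le 2\sup_{s\le r\le t}|Y^N(r)-Y^N(s)|$, it suffices to bound the latter in $L^{2p}$ by $C_{p,T}\sqrt{t-s}$. A naive bound via the total variation of the piecewise linear path $B^N$ would scale like $(t-s)/\sqrt{\Delta_N}$ and blow up as $N\to\infty$, so the plan is to isolate the discrete martingale formed at the mesh points and apply Doob's maximal inequality together with Burkholder-Davis-Gundy.

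\emph{Regime $t-s\le\Delta_N$.} Then $s$ and $t$ lie in at most two adjacent mesh intervals, and applying (\ref{estimate for YN}) of Lemma~\ref{Estimate for YN}~(1) to each piece yields $|Y^N(r)-Y^N(s)|\le C(|\Delta_N B_k|+|\Delta_N B_{k+1}|+(t-s))$ for every $r\in[s,t]$. Taking the $L^{2p}$-norm gives a bound of order $\sqrt{\Delta_N}+\Delta_N$, hence $O(\sqrt{t-s})$.

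\emph{Regime $t-s>\Delta_N$.} Let $k,j$ satisfy $s\in[t^N_{k-1},t^N_k]$ and $t\in[t^N_{j-1},t^N_j]$. For $r\in[t^N_k,t]$ lying in $[t^N_{i-1},t^N_i]$ with $k+1\le i\le j$, I would decompose
\begin{align*}
 Y^N(r)-Y^N(s)&=\bigl(Y^N(t^N_k)-Y^N(s)\bigr)+(M_{i-1}-M_k)+\sum_{\ell=k+1}^{i-1}E_\ell+\bigl(Y^N(r)-Y^N(t^N_{i-1})\bigr),
\end{align*}
where $M_i:=\sum_{\ell=1}^i\sigma(X^N(t^N_{\ell-1}))\Delta_N B_\ell$ is a discrete-time $\mathcal{F}_{t^N_i}$-martingale (since $\Delta_N B_\ell$ is independent of $\mathcal{F}_{t^N_{\ell-1}}$ with mean zero) and $E_\ell:=I_1^\ell(t^N_\ell)+I_2^\ell(t^N_\ell)+I_3^\ell(t^N_\ell)+I_4^\ell(t^N_\ell)$ is the per-interval remainder appearing in Lemma~\ref{Estimate for YN}~(2); the case $r\in[s,t^N_k]$ is already handled by $|Y^N(r)-Y^N(s)|\le C(|\Delta_N B_k|+\Delta_N)$. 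Taking the supremum over $r$ and using Minkowski, matters reduce to four $L^{2p}$-estimates. The two boundary terms give $|Y^N(t^N_k)-Y^N(s)|\le C(|\Delta_N B_k|+\Delta_N)$ and $\sup_{k+1\le i\le j}|Y^N(r)-Y^N(t^N_{i-1})|\le C(\max_{k+1\le i\le j}|\Delta_N B_i|+\Delta_N)$; the crude union bound $E[\max_i|\Delta_N B_i|^{2p}]\le C_p(j-k)\Delta_N^p$, together with $(j-k)\Delta_N\le 2(t-s)$ and $\Delta_N\le t-s$, produces $L^{2p}$-norm $\le C_{p,T}\sqrt{t-s}$. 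For the martingale, Doob's maximal inequality and discrete Burkholder-Davis-Gundy yield
\begin{align*}
 E\Big[\max_{k\le i\le j}|M_i-M_k|^{2p}\Big]&\le C_p E\Big[\Big(\sum_{\ell=k+1}^j|\Delta_N B_\ell|^2\Big)^p\Big]\le C_p((j-k)\Delta_N)^p\le C_p(t-s)^p.
\end{align*}
Finally, from $|E_\ell|\le C(|\Delta_N B_\ell|^2+\Delta_N|\Delta_N B_\ell|+\Delta_N)$ and $E[|\Delta_N B_\ell|^{2p}]\le C_p\Delta_N^p$, Minkowski gives $L^{2p}$-norm of $\sum_\ell|E_\ell|$ at most $C_{p,T}(j-k)\Delta_N\le C_{p,T}(t-s)\le C_{p,T}\sqrt{T}\sqrt{t-s}$. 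Assembling the four bounds by Minkowski yields the lemma.

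\emph{Main obstacle.} The crux is the Brownian scaling of the driving path: since $|\dot B^N|\sim 1/\sqrt{\Delta_N}$, any pathwise bound via the total variation of $B^N$ is too crude to survive $N\to\infty$. Recognising that $Y^N(t^N_k)$ is, up to controllable lower-order remainders, a discrete martingale, and invoking Doob together with BDG, is what recovers the correct $\sqrt{t-s}$ rate uniformly in $N$; the care taken with $\max_i|\Delta_N B_i|$ via a union bound (rather than a pointwise moment) is what keeps the right-endpoint partial-interval piece on the proper scale.
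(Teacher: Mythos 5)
Your overall strategy is essentially the paper's: split off the single-interval case, and otherwise decompose $Y^N(t)-Y^N(s)$ at the mesh points into the martingale part $\sum_\ell\sigma(X^N(t^N_{\ell-1}))\Delta_N B_\ell$ (handled by Doob/BDG) plus the per-interval remainders $I_1^\ell,\dots,I_4^\ell$ of Lemma~\ref{Estimate for YN}~(2) (handled by moment bounds and $(j-k)\Delta_N\le C(t-s)$). The paper works with the continuous martingale $M^N(t)=\int_0^t\sigma(X^N(\pi_N(s)))dB(s)$ and observes that $J_0^N$ is its piecewise linear interpolant, but this is the same estimate as your discrete Doob/BDG step. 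The one genuine difference is your treatment of the quadratic term: where the paper invokes hypercontractivity of the Ornstein--Uhlenbeck operator to control $E[(\sum_k|\xi_k|^2)^{2p}]$, you simply apply Minkowski's inequality in $L^{2p}$ to $\sum_\ell|\Delta_N B_\ell|^2$. That is more elementary and does suffice here, since $\sum_\ell\||\Delta_N B_\ell|^2\|_{L^{2p}}\le C_p(j-k)\Delta_N\le C_p(t-s)$ already gives the required $(t-s)^{2p}$ after raising to the power $2p$; the chaos argument buys nothing extra for this lemma.

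There is, however, one step that fails as written: in the regime $t-s\le\Delta_N$ you discard the factor $(t-s)/\Delta_N$ from (\ref{estimate for YN}) and bound the increment by $C(|\Delta_N B_k|+|\Delta_N B_{k+1}|+(t-s))$, whose $L^{2p}$-norm is of order $\sqrt{\Delta_N}$; the conclusion ``hence $O(\sqrt{t-s})$'' is false precisely in this regime, e.g.\ when $t-s=\Delta_N^2$ one has $\sqrt{\Delta_N}\gg\sqrt{t-s}$. The fix is immediate and is exactly what the cited inequality provides: keeping the factor, $\bigl\|\,|\Delta_N B_k|\,(t-s)/\Delta_N\bigr\|_{L^{2p}}\le C(t-s)/\sqrt{\Delta_N}\le C\sqrt{t-s}$ because $t-s\le\Delta_N$. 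This is how the paper handles the single-interval case, and with that correction your argument is complete.
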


\begin{proof}
Pick two points $0\le s\le t\le T$.
First consider the case where there exists
$1\le k\le N$ such that $t_{k-1}\le s\le t\le t_k$.
Then by (\ref{estimate for YN}),
\begin{align}
 \max_{s\le u\le v\le t}|Y^N(u)-Y^N(v)|
\le C(|\Delta B_k|\frac{t-s}{\Delta}+t-s).
\end{align}
Hence $E[\|Y^N\|_{\infty,[s,t]}\|^{2p}]\le C_p(t-s)^p$.
If $t_{k-1}\le s\le t_k<t\le t_{k+1}$ for some $k$,
noting
\begin{align}
\|Y^N\|_{\infty,[s,t]} \le \|Y^N\|_{\infty,[s,t_k]}+\|Y^N\|_{\infty,[t_k,t]},
\end{align}
we can use the estimate in the first case.
We consider the other cases.
Let us choose $1\le l<m-1\le N$ such that 
$t_{l-1}\le s\le t_l<t_{m-1}\le t\le t_m$.
Then
\begin{align}
\lefteqn{Y^N(t)-Y^N(s)}\nonumber\\
&=
\sum_{n=0}^4\left\{
I_n^l(t_l)-I_n^l(s)+\sum_{k=l+1}^{m-1}(I^k_n(t_k)-I^k_n(t_{k-1}))+
I^m_n(t)-I^m_n(t_{m-1})\right\}\label{YN-YN}\\
&=\sum_{n=0}^4(J_n^N(t)-J_n^N(s)).\nonumber
\end{align}
Note that $\{J_n^N(t)~|~0\le t\le T\}$ 
are continuous processes and it suffices to estimate
$E[\|J_n^N\|_{\infty,[s,t]}^{2p}]$.
First let us consider the term $J_0^N$.
Let $M^N(t)$ be a continuous ${\cal F}_t$-martingale such that
\begin{align}
 M^N(t)&=\int_0^t\sigma(X^N(\pi_N(s)))dB(s).\label{martingale MN}
\end{align}
Then $J^N_0$ is the piecewise linear approximation
of $M^N$ at the times $\{t_k\}_{k=1}^N$.
Therefore,
\begin{align}
 \|J^N_0\|_{\infty,[s,t]}&\le \max_{l-1\le k,k'\le
 m}|M^N(t_k)-M^N(t_{k'})|\nonumber\\
&\le 2\max_{l-1\le k\le m}|M^N(t_k)-M^N(t_l)|\nonumber\\
&\le 2\max_{t_{l-1}\le r\le t_m}|M^N(r)-M^N(t_l)|.\label{oscJN0}
\end{align}
Using Doob's inequality, we get
\begin{align}
 E[\|J_0^N\|_{\infty,[s,t]}^{2p}]\le C_p(t_m-t_{l-1})^{p}\le 3^pC_p(t-s)^p.
\end{align}
Next we consider the term $J^N_3$.
By the estimate (\ref{I3k}), we have
\begin{align}
\|J^N_3\|_{\infty,[s,t]}&\le
C\sum_{k=l}^m\left(
|\Delta B_k|^2+\Delta\cdot |\Delta B_k|\right)\le
C\left(\sum_{k=1}^m|\Delta B_k|^2\right)+C\Delta(t-s).
\end{align}
Note that
\begin{align}
\{\Delta B_k\}_{k=l}^m=\sqrt{\Delta}\{\xi_k\}_{k=l}^m\quad \mbox{in law},
\end{align}
where $\{\xi_k\}_{k=l}^m$ are i.i.d. random vectors
whose common distribution is the normal distribution 
on $\RR^n$ with $0$ mean and identity covariance matrix.
Hence
\begin{align}
E\left[\|J^N_3\|_{\infty,[s,t]}^{2p}\right]
&\le
 C_p\Delta^{2p}E\left[\left(\sum_{k=l}^m|\xi_k|^2\right)^{2p}\right]
+C_p(t-s)^{4p}.
\end{align}
Since $S_{m,l}=\sum_{k=l}^m(|\xi_k|^2-n)$ belongs to the Wiener chaos 
of order 2, there exists a constant $C_q$~$(q\ge 1)$ 
which is independent of
$m,l$ such that
\begin{align}
\|S_{m,l}\|_{L^{q}}\le C_q\|S_{m,l}\|_{L^2}.
\end{align}
This follows from the hypercontractivity of the Ornstein-Uhlenbeck
operator.
See \cite{bogachev}.
Therefore
\begin{align}
E\left[\left(\sum_{k=l}^m|\xi_k|^2\right)^{2p}\right]&=
E\left[\left\{S_{m,l}+n(m-l+1)\right\}^{2p}\right]\nonumber\\
&\le C_p\|S_{m,l}\|_{L^2}^{2p}+C_p\left\{n(m-l+1)\right\}^{2p}\nonumber\\
&\le C_p\left\{n(m-l+1)\right\}^p+C_p\left\{n(m-l+1)\right\}^{2p}.
\end{align}
Thus
\begin{align}
 E\left[\|J^N_3\|_{\infty,[s,t]}^{2p}\right]&\le
C_pn^{p}\Delta^p(t-s)^p+C_pn^{2p}(t-s)^{2p}+C_p(t-s)^{4p}\nonumber\\
&\le C_p (t-s)^{2p}.
\end{align}
We can estimate other terms in a similar way and
we complete the proof.
\end{proof}

\begin{lem}\label{Moment estimate for XN}
Assume {\rm (A)} and {\rm (B)}.
Let $p\ge 1$.
There exists a positive number $C_p$ which is independent of
$N$ such that for all $0\le s\le t\le T$,
\begin{align}
E[\|X^N\|_{\infty,[s,t]}^{2p}]&\le C_p|t-s|^p,\\
E[|\Phi^N\|_{[s,t]}^{2p}]&\le C_p|t-s|^p.\label{phiN}
\end{align}
\end{lem}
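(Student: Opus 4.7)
The plan is to mirror the proof of Lemma~\ref{Moment estimate for XNE}, with $Y^N$ playing the role of $Y_E^N$. Since $X^N=\Gamma(Y^N)$ we have $X^N=Y^N+\Phi^N$, hence
\begin{equation*}
\|X^N\|_{\infty,[s,t]}\leq \|Y^N\|_{\infty,[s,t]}+\|\Phi^N\|_{[s,t]},
\end{equation*}
so in view of Lemma~\ref{Osc for YN} it is enough to establish (\ref{phiN}).

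I would fix some $\theta\in (0,1/2)$ and apply Lemma~\ref{Estimate for phi} pathwise with $w=Y^N$:
\begin{equation*}
\|\Phi^N\|_{[s,t]}\leq C_1\bigl(1+\|Y^N\|_{\mathcal{H},[s,t],\theta}^{C_2}(t-s)\bigr) e^{C_3\|Y^N\|_{\infty,[s,t]}}\|Y^N\|_{\infty,[s,t]}.
\end{equation*}
Raising this to the $2p$-th power and using H\"older's inequality with three exponents $r_1,r_2,r_3>1$ satisfying $r_1^{-1}+r_2^{-1}+r_3^{-1}=1$ reduces the task to three $N$-uniform moment estimates: (i) $E[\|Y^N\|_{\infty,[s,t]}^{2pr_3}]\leq C(t-s)^{pr_3}$, which, after taking the $r_3$-th root, delivers the decisive $(t-s)^p$ factor and is exactly Lemma~\ref{Osc for YN}; (ii) $\sup_N E\bigl[e^{a\|Y^N\|_{\infty,[0,T]}}\bigr]<\infty$ for every $a>0$; (iii) $\sup_N E\bigl[\|Y^N\|_{\mathcal{H},[0,T],\theta}^{q}\bigr]<\infty$ for every $q\geq 1$.

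For (iii) I would combine Lemma~\ref{Osc for YN}, which yields $E[|Y^N(t)-Y^N(s)|^{2q}]\leq C_q(t-s)^q$ uniformly in $N$, with Lemma~\ref{Garsia}; that lemma produces H\"older control at exponent $\theta/2$ for any $\theta\in(0,1)$, so our choice $\theta<1/2$ is exactly what is required. The main obstacle is (ii): unlike $Y^N_E$ in Lemma~\ref{Moment estimate for XNE}, $Y^N$ is driven by the piecewise linear path $B^N$ and is not the It\^o integral of a bounded predictable process. To handle it I would use the decomposition $Y^N-x=\sum_{n=0}^{4}J_n^N$ already introduced in the proof of Lemma~\ref{Osc for YN}. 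The leading piece $J_0^N$ is the piecewise linear interpolation at the grid points of the martingale $M^N$ of (\ref{martingale MN}), whose quadratic variation is bounded by $\|\sigma\|_\infty^2 T$ independently of $N$, so $\sup_N E[\exp(a\max_{0\leq t\leq T}|M^N(t)|)]<\infty$ by the exponential martingale inequality. The remaining $J_n^N$ for $n\geq 1$ are dominated pathwise via (\ref{I1k})--(\ref{I4k}) by $T$, $\sum_k|\Delta B_k|$, and $\sum_k|\Delta B_k|^2$; exponential integrability then follows from the explicit identity $E\exp(a\sum_k|\Delta B_k|^2)=(1-2a\Delta)^{-N/2}$, which is uniformly bounded in $N$ for small enough $a>0$ and converges to $e^{aT}$, together with the Cauchy--Schwarz bound $\Delta\sum_k|\Delta B_k|\leq\sqrt{T\Delta}\sqrt{\sum_k|\Delta B_k|^2}$. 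Combining (i)--(iii) through H\"older's inequality and bounding $(t-s)\leq T$ in the H\"older correction factor yields $E[\|\Phi^N\|_{[s,t]}^{2p}]\leq C_p(t-s)^p$, as desired.
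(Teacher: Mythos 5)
Your argument is correct and is essentially the paper's own proof: reduce to (\ref{phiN}) via $\Phi^N=L(Y^N)$, apply Lemma~\ref{Estimate for phi} together with Lemma~\ref{Osc for YN} and Lemma~\ref{Garsia}, and obtain the uniform exponential integrability of $\|Y^N\|_{\infty,[0,T]}$ by splitting $Y^N$ into the martingale $M^N$ of (\ref{martingale MN}) (bounded quadratic variation) plus remainder terms controlled by $\sum_k|\Delta B_k|^2$, whose exponential moment is computed explicitly as a Gaussian integral. The only caveat, shared with the paper (which phrases it as $\sup_{N\ge N_0}$), is that $(1-2a\Delta_N)^{-nN/2}$ is finite only for $N>2aT$, so for the possibly large $a$ demanded by the H\"older step one gets uniformity only over $N\ge N_0(a)$ and must dispose of the finitely many remaining $N$ separately.
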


\begin{proof}
 It suffices to prove (\ref{phiN}).
By checking the exponential integrability of
$\|Y^N\|_{\infty,[0,T]}$, we can prove this by using the fact
$\Phi^N=L(Y^N)$, Lemma~\ref{Estimate for phi},
Lemma~\ref{Osc for YN} and Lemma~\ref{Garsia}.
We prove that
for any $a>0$, there exists $N_0$ such that
\begin{align}
\sup_{N\ge N_0}E[e^{a\|Y^N\|_{\infty,[0,T]}}]<\infty.\label{exponential
 integranility of YN}
\end{align}
By the estimate (\ref{estimate for YN}),
\begin{align}
\max_{0\le t\le T}|Y^N(t)|&\le \max_{0\le k\le N}|Y^N(t_k)|+C\max_{1\le
 k\le N}|\Delta B_k|+C/N.
\end{align}
Because $\sup_NE[e^{a\max_{1\le
 k\le N}|\Delta B_k|}]<\infty$  for all $a>0$,
it is sufficient to prove
\begin{align}
\sup_{N\ge N_0} E[e^{a\max_{0\le k\le N}|Y^N(t_k)|}]
\end{align}
By the decomposition and the estimates of $Y^N$ in Lemma~\ref{Estimate for YN},
we have
\begin{align}
\max_{0\le k\le N}|Y^N(t_k) |&
\le C+
\max_{0\le k\le N}|M^N(t_k)|+
C\sum_{k=1}^N|\Delta B_k|^2,
\end{align}
where $\{M^N(t)\}$ is the continuous martingale
which is defined in (\ref{martingale MN}).
Since the quadratic variation is bounded,
we have $\sup_NE[e^{a\max_{0\le t\le T}|M^N(t)|}]<\infty$
for any $a$.
Also
\begin{align}
 E\left[\exp\left(\sum_{k=1}^{N}Ca|\Delta B_k|^2\right)\right]&=
\prod_{k=1}^NE[e^{Ca|\Delta B_k|^2}]\nonumber\\
&=\prod_{k=1}^N\int_{\RR^n}\exp\left(\frac{CaT}{N}|x|^2-\frac{1}{2}|x|^2\right)
\frac{1}{(2\pi)^{n/2}}dx\nonumber\\
&=
\left(1-\frac{2CaT}{N}\right)^{-nN/2}\to e^{CaTn}\quad \mbox{as $N\to \infty$}.
\end{align}
These imply (\ref{exponential
 integranility of YN}) and the proof is finished.
\end{proof}

The following is a key lemma for the proof of
$L^p$ convergence of Wong-Zakai approximation.

\begin{lem}\label{Difference of wong-zakai and euler 1}
 Assume {\rm (A)}, {\rm (B)} and {\rm (C)}.
Let $X^N_E$ be the Euler-Peano approximation to 
SDE$(\sigma,\tilde{b})$,
where
$\tilde{b}=b+\frac{1}{2}\tr (D\sigma)(\sigma)$.
Then for any $0<\theta<1$,
there exists a positive constant $C_{\theta}$ 
such that for all $N$,
\begin{align}
\sup_{0\le k\le
 N}E\left[|X^N(t^N_k)-X^N_E(t^N_k)|^2\right]
\le C_{\theta}\cdot \Delta_N^{\theta/2}.\label{quarter}
\end{align}
\end{lem}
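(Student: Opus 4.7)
The plan is to identify, on each subinterval $[t_{k-1}^N,t_k^N]$, the leading terms in $X^N(t_k)-X^N(t_{k-1})$ and match them to the Euler--Peano step for SDE$(\sigma,\tilde b)$, so that the Stratonovich correction $\tilde b-b=\tfrac12\tr(D\sigma)(\sigma)$ absorbs the ``extra'' $\Delta$-order contribution generated by the piecewise-linear driver $B^N$. Concretely, starting from the decomposition in Lemma~\ref{Estimate for YN}, I would Taylor expand $(D\sigma)\sigma$ inside $I_1^k(t_k)$ around $X^N(t_{k-1})$ to obtain
\begin{equation*}
I_1^k(t_k)=\tfrac12(D\sigma)(\sigma)(X^N(t_{k-1}))(\Delta B_k,\Delta B_k)+R^{(1)}_k,
\end{equation*}
where $R^{(1)}_k$ is cubic in $|\Delta B_k|$ plus lower-order pieces; the remaining terms $I_2^k,I_3^k,I_4^k$ are controlled by (\ref{I2k})--(\ref{I4k}).

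Writing $Z_k=X^N(t_k)-X^N_E(t_k)$ and subtracting the one-step increments, the quadratic-in-$\Delta B_k$ contribution from $X^N$ together with the It\^o correction $\tilde b$ for $X^N_E$ rearranges as
\begin{equation*}
\tfrac12(D\sigma)(\sigma)(X^N(t_{k-1}))(\Delta B_k\otimes\Delta B_k-\Delta I)+\tfrac{\Delta}{2}\bigl\{\tr(D\sigma)(\sigma)(X^N(t_{k-1}))-\tr(D\sigma)(\sigma)(X^N_E(t_{k-1}))\bigr\}.
\end{equation*}
The first piece $M_k$ is $\mathcal F_{t_{k-1}}$-conditionally centered---it is the martingale increment to be killed by taking expectations---while the second is Lipschitz in $Z_{k-1}$ and only contributes $O(\Delta)\cdot E[|Z_{k-1}|^2]$ after summation. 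The genuine diffusion and drift differences produce $[\sigma(X^N)-\sigma(X^N_E)]\Delta B_k$ and $[b(X^N)-b(X^N_E)]\Delta$, both Lipschitz in $Z_{k-1}$.

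For the reflection part I would mimic the Lyapunov-function argument from the proof of Theorem~\ref{Euler-Peano approximation}: set $\mu_k=\exp\bigl(-\tfrac{2}{\gamma}(f(X^N(t_k))+f(X^N_E(t_k)))\bigr)$, track $u_k:=\mu_k|Z_k|^2$, and expand $u_k-u_{k-1}=(\mu_k-\mu_{k-1})|Z_k|^2+\mu_{k-1}(|Z_k|^2-|Z_{k-1}|^2)$. Condition (C), applied pointwise to the Skorohod problems defining $\Phi^N$ and $\Phi^N_E$, absorbs the inner product $(Z_{k-1},\Delta\Phi^N_k-\Delta\Phi^N_{E,k})$ against the $f$-correction coming from $\mu_k-\mu_{k-1}$, exactly as in the passage leading to (\ref{kNt}). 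Taking expectations the martingale pieces vanish, the Lipschitz pieces contribute $C\Delta\cdot E[u_{k-1}]$, and the remaining error terms (including $R^{(1)}_k$ and the quadratic variation of $M_k$) will need to be of size $o(\Delta)$ per step. Discrete Gronwall over $k\le N$ then yields the bound, and since $f\in C^2_b$ gives a two-sided bound on $\mu_k$, this transfers to $E[|Z_k|^2]$.

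The main obstacle, and the source of the loss from the optimal rate $\Delta_N$ to $\Delta_N^{\theta/2}$, is the $L^2$ control of $R^{(1)}_k$ and $I_3^k$, which couple $\Delta B_k$ to the total variation of $\Phi^N$. The cleanest way to bound these is through Lemma~\ref{Moment estimate for XN} together with the quantitative estimate of Lemma~\ref{Estimate for phi}, whose right-hand side depends on the H\"older seminorm $\|Y^N\|_{\mathcal H,[s,t],\theta}$. That seminorm is itself only controlled in $L^p$ via the Garsia--Rodemich--Rumsey bound of Lemma~\ref{Garsia}, which only produces H\"older exponent $\theta/2$ for $\theta<1$; this is precisely the ceiling that caps the rate in~(\ref{quarter}) at $\Delta_N^{\theta/2}$.
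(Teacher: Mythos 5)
Your proposal follows essentially the same route as the paper's proof: a one-step expansion in which the correction $\tilde b-b=\frac12\tr(D\sigma)(\sigma)$ cancels the conditional mean of the quadratic term produced by the piecewise-linear driver, the Lyapunov functional $e^{-\frac{2}{\gamma}(f(X^N)+f(X^N_E))}|Z^N|^2$ combined with condition (C) to absorb the reflection terms, and a discrete Gronwall recursion $a_k\le(1+C\Delta)a_{k-1}+b_k$. Your diagnosis of the rate also matches the paper's mechanism: the terms coupling $\Delta B_k$ to the local-time increments are summed by pairing the total variations $\|\Phi^N\|_{[0,T]}+\|\Phi^N_E\|_{[0,T]}$ (bounded in $L^2$ uniformly in $N$ via Lemmas~\ref{Estimate for phi} and \ref{Moment estimate for XN}) against $\max_k\|B\|_{\infty,[t_{k-1},t_k]}$, whose $L^2$ norm is $O(\Delta^{\theta/2})$ by the Garsia--Rodemich--Rumsey bound, and this is exactly where the exponent $\theta/2$ enters.
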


\begin{rem}
{\rm
The order of convergence in (\ref{quarter})
is, roughly speaking, half of that of
the Wong-Zakai approximation to the SDE without
reflection term.
This convergence order can be expected by
the $1/2$-H\"older continuity of the
Skorohod map.
Consider two Skorohod equations
$\xi=w+\phi$, $\xi'=w'+\phi'$.
Then it was proved in \cite{saisho} (see also \cite{lions-sznitman})
that under the assumptions (A) and (B),
\begin{align}
|\xi(t)-\xi'(t)|^2&\le
\left\{|w(t)-w'(t)|^2+
4\left(\|\phi\|_{[0,t]}+\|\phi'\|_{[0,t]}\right)
\max_{0\le s\le t}|w(s)-w'(s)|\right\}\nonumber\\
& \quad
\exp\left\{\left(\|\phi\|_{[0,t]}+\|\phi'\|_{[0,t]}\right)/r_0\right\},
\qquad 0\le t\le T.\label{Hoelder continuous map}
\end{align}
By this $1/2$-H\"older continuity of the Skorohod map $\Gamma$, we obtain
\begin{align}
 E[|\Gamma(B)_t-\Gamma(B^N)_t|^2]&\le
C\Delta_N^{\theta/2},\label{Gamma difference}
\end{align}
where $0<\theta<1$.
By examining the proof in \cite{saisho}, one can replace
the term $\|\phi\|_{[0,t]}+\|\phi'\|_{[0,t]}$ in 
(\ref{Hoelder continuous map}) by
$\|\phi-\phi'\|_{[0,t]}$.
We are not sure whether or not this change gives better estimates
than the above.
Of course the estimate in (\ref{Hoelder continuous map})
is a pathwise estimate and there are no reason that 
the pathwise estimate gives good estimate for the 
expectation also.
Of course, if $D$ is a half space (or convex polyhedron, see
 \cite{dupuis-ishii}) 
in a Euclidean space,
then $\Gamma$ is Lipschitz continuous and
the upper bound in (\ref{Gamma difference}) is $O(\Delta_N^{\theta})$.
Also, it seems that the calculation in \cite{doss}
also gives the convergence speed $O(\Delta_N^{\theta})$
for Wong-Zakai approximations of
general reflecting SDEs in the
half space case.
However, 
We do not know examples of reflecting SDE 
for which the slow convergence speed $\Delta_N^{\theta/2}$
really appear.
} 
\end{rem}

In the proof of this lemma, 
the integrals which contains ${\cal F}_t$-semimartingales
and non-adapted bounded variation processes,
{\it e.g.} Wong-Zakai approximation $X_N(t)$
appear.
Hence we need the following definition of
the integrals.

\begin{lem}\label{Extended stochastic integral}
Let $X(t), Y(t)$ be ${\cal F}_t$-continuous semimartingales and
$A(t)$ be bounded variation continuous process.
Suppose that $\sup_{0\le t\le T}\{|X(t)|+|Y(t)|\}+|A(\cdot)|_{[0,T]}\in L^p$
for all $p$.
Define
\begin{align}
 \int_0^tX(s)A(s)dY(s)&=
\lim_{N\to\infty}
\sum_{k=1}^NX(t^N_{k-1})A(t^N_{k-1})(Y(t^N_k)-Y(t^N_{k-1})),\\
\langle XA, Y\rangle_t&=\lim_{N\to\infty}
\sum_{k=1}^N\left(
(X(t^N_{k})A(t^N_{k}))-
(X(t^N_{k-1})A(t^N_{k-1}))\right)
(Y(t^N_k)-Y(t^N_{k-1})),
\end{align}
where $t^N_k=tk/N$.
These converge in probability 
and it holds that
\begin{align}
 \int_0^tX(s)A(s)dY(s)
&=\int_0^tA(s)dZ(s)=A(t)Z(t)-\int_0^tZ(s)dA(s),\label{mix integral}\\
\langle XA,Y\rangle_t&=
\int_0^tA(s)d\langle X, Y\rangle_s,
\end{align}
where 
$Z(s)=\int_0^tX(s)dY(s)$ is usual Ito integral
and the RHS of $(\ref{mix integral})$ is Riemann-Stieltjes
integral.
\end{lem}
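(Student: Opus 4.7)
The plan is to move the non-adapted factor $A$ from the semimartingale side to the bounded variation side by discrete (Abel) summation by parts. This reduces both identities to the combination of two standard ingredients: (i) the uniform-in-$k$ convergence in probability of the It\^o Riemann sums for $Z$ and of the quadratic-covariation Riemann sums for $\langle X,Y\rangle$; and (ii) pathwise Riemann--Stieltjes convergence of Riemann sums of a continuous integrand against the continuous BV path $A$. The $L^p$ integrability of $\sup|X|+\sup|Y|+\|A\|_{[0,T]}$ is used to control the cross terms and to upgrade the preceding bounds to convergence in probability of the full sums.

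For the first identity, set $\tilde Z_N(t_k^N):=\sum_{j=1}^k X(t_{j-1}^N)(Y(t_j^N)-Y(t_{j-1}^N))$, so that $\tilde Z_N(t_0^N)=0$ and Abel summation rewrites the defining Riemann sum as
\[
\sum_{k=1}^N X(t_{k-1}^N)A(t_{k-1}^N)\bigl(Y(t_k^N)-Y(t_{k-1}^N)\bigr)
=A(t_{N-1}^N)\tilde Z_N(t)-\sum_{k=1}^{N-1}\bigl(A(t_k^N)-A(t_{k-1}^N)\bigr)\tilde Z_N(t_k^N).
\]
Continuity of $X$ combined with Burkholder--Davis--Gundy applied to $\int_0^s(X(u)-X(\pi_N(u)))dY(u)$ yields $\max_{0\le k\le N}|\tilde Z_N(t_k^N)-Z(t_k^N)|\to 0$ in probability. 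Hence the boundary term tends to $A(t)Z(t)$, and writing $\tilde Z_N(t_k^N)=Z(t_k^N)+(\tilde Z_N-Z)(t_k^N)$ splits the remaining sum into a pathwise Riemann--Stieltjes sum $\sum_{k=1}^{N-1}(A(t_k^N)-A(t_{k-1}^N))Z(t_k^N)\to\int_0^tZ(s)\,dA(s)$ (convergent since $Z$ is continuous and $A$ is BV) plus an error dominated by $\|A\|_{[0,T]}\cdot\max_k|\tilde Z_N(t_k^N)-Z(t_k^N)|$, which tends to $0$ in probability using the $L^p$ hypothesis on $\|A\|_{[0,T]}$. The pathwise integration by parts $A(t)Z(t)-\int_0^tZ(s)dA(s)=\int_0^tA(s)dZ(s)$, valid since $A$ is continuous and of bounded variation, then delivers \eqref{mix integral}.

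For the bracket identity, decompose the discrete increment as
\[
X(t_k^N)A(t_k^N)-X(t_{k-1}^N)A(t_{k-1}^N)=A(t_k^N)\bigl(X(t_k^N)-X(t_{k-1}^N)\bigr)+X(t_{k-1}^N)\bigl(A(t_k^N)-A(t_{k-1}^N)\bigr).
\]
The sum built from the last summand is dominated in absolute value by $\sup_{0\le s\le t}|X(s)|\cdot\|A\|_{[0,T]}\cdot\max_k|Y(t_k^N)-Y(t_{k-1}^N)|$, which vanishes in probability by uniform continuity of $Y$ together with the $L^p$ moment bounds. For the first summand, introduce $V_N(t_k^N):=\sum_{j=1}^k(X(t_j^N)-X(t_{j-1}^N))(Y(t_j^N)-Y(t_{j-1}^N))$; the classical Riemann sum approximation of the covariation gives $\max_k|V_N(t_k^N)-\langle X,Y\rangle_{t_k^N}|\to 0$ in probability. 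A second Abel summation rewrites $\sum_k A(t_k^N)(V_N(t_k^N)-V_N(t_{k-1}^N))=A(t)V_N(t)-\sum_{k=1}^{N-1}(A(t_{k+1}^N)-A(t_k^N))V_N(t_k^N)$, after which the same two-step argument as above (pathwise Riemann--Stieltjes convergence against $dA$ for the main piece, $\|A\|_{[0,T]}$-controlled estimate for the remainder, integration by parts) yields the identity $\int_0^tA(s)d\langle X,Y\rangle_s$.

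The one non-routine point, and hence the main obstacle, is the uniform-in-$k$ convergence in probability of $\tilde Z_N$ and $V_N$ to $Z$ and $\langle X,Y\rangle$, respectively. Both are standard for continuous ${\cal F}_t$-semimartingales, the former via Burkholder--Davis--Gundy applied to the martingale part of $\int (X-X\circ\pi_N)dY$, the latter via polarisation of the $\sum(\Delta X_j)^2\to\langle X\rangle_t$ convergence; in the present setting, however, they must be coupled with the $L^p$ bound on $\|A\|_{[0,T]}$ in order to convert these maxima into the claimed in-probability convergence of the full sums weighted by $A$.
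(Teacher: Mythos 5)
Your argument is correct and is precisely the ``standard argument (integration by parts)'' that the paper invokes while omitting the proof: Abel summation to shift the non-adapted factor $A$ onto the bounded-variation side, uniform-in-$k$ convergence in probability of the It\^o and covariation Riemann sums, and pathwise Riemann--Stieltjes integration by parts. The only cosmetic remark is that the $L^p$ bound on $\|A\|_{[0,T]}$ is not actually needed for the convergence in probability (a.s.\ finiteness of the total variation suffices); it is there so that the resulting integrals lie in every $L^p$ for the later expectation estimates.
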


Let us consider a set of stochastic processes ${\mathbb S}$
which consists of
a finite sum of 
product process $Y(t)A(t)$.
Here $Y(t)$ is a ${\cal F}_t$-
continuous semimartingale
and $A(t)$ is a continuous bounded variation process which is not
necessarily ${\cal F}_t$-adapted and
$\sup_{0\le t\le T}|Y(t)|+\|A\|_{[0,T]}\in \cap_{p\ge 1}L^p$.
Then this class is stable under the stochastic 
integral in the sense of the above lemma.
In the calculation below, we use the integrals of stochastic processes
in this sense.
Moreover the following chain rule holds.

\begin{lem}\label{Extended Ito formula}
 Let $Y,Z\in {\mathbb S}$.
Then
\begin{align}
 Y(t)Z(t)&=Y(0)Z(0)+\int_0^tY(s)dZ(s)+\int_0^tZ(s)dY(s)+
\langle Y,Z\rangle_t,
\end{align}
where $\langle Y,Z\rangle_t$
is defined similarly to Lemma~$\ref{Extended stochastic integral}$.
\end{lem}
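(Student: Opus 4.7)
The plan is to mirror the classical proof of Itô's product formula, using the definitions of the mixed integrals from Lemma~\ref{Extended stochastic integral} to identify each limit. Because all four terms in the claimed identity are bilinear in $(Y,Z)$, I would first reduce to the case of a single product on each side, $Y=Y_1 A_1$ and $Z=W_1 B_1$, where $Y_1,W_1$ are continuous ${\cal F}_t$-semimartingales and $A_1,B_1$ are continuous bounded variation processes (possibly non-adapted) satisfying the integrability built into ${\mathbb S}$.

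Next I would fix the partition $t^N_k=tk/N$ and write the telescoping identity
\begin{align*}
Y(t)Z(t)-Y(0)Z(0)
&=\sum_{k=1}^N Y(t^N_{k-1})\bigl(Z(t^N_k)-Z(t^N_{k-1})\bigr)+\sum_{k=1}^N Z(t^N_{k-1})\bigl(Y(t^N_k)-Y(t^N_{k-1})\bigr)\\
&\quad+\sum_{k=1}^N \bigl(Y(t^N_k)-Y(t^N_{k-1})\bigr)\bigl(Z(t^N_k)-Z(t^N_{k-1})\bigr),
\end{align*}
and pass to the limit as $N\to\infty$. Interpreting $dZ=B_1\,dW_1+W_1\,dB_1$ via the first formula of Lemma~\ref{Extended stochastic integral} (with the $W_1\,dB_1$ piece read as a Riemann--Stieltjes integral), the first sum converges in probability to $\int_0^t Y(s)\,dZ(s)$, and symmetrically for the second. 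The third sum converges in probability to $\langle Y,Z\rangle_t$ directly from the second definition in that lemma.

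The main technical step will be verifying that this bracket has the expected shape. Expanding $\Delta(Y_1 A_1)_k=A_1(t^N_{k-1})\Delta Y_1 +Y_1(t^N_k)\Delta A_1$ and the analogous decomposition for $\Delta(W_1 B_1)_k$ breaks the cross sum into four pieces. Since $A_1$ and $B_1$ are continuous and of bounded variation, they have zero quadratic variation, so every piece carrying a factor $\Delta A_1$ or $\Delta B_1$ tends to zero in $L^1$ by uniform continuity together with the $L^p$-moment hypotheses built into ${\mathbb S}$. The surviving term $\sum A_1(t^N_{k-1})B_1(t^N_{k-1})\,\Delta Y_1\,\Delta W_1$ converges to $\int_0^t A_1(s)B_1(s)\,d\langle Y_1,W_1\rangle_s$, which is exactly $\langle Y,Z\rangle_t$ by the second identity of Lemma~\ref{Extended stochastic integral}. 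Assembling the three limits yields the product formula; the hardest part will be maintaining convergence in probability uniformly enough across these decompositions, which the $L^p$ bounds in the definition of ${\mathbb S}$ should supply via uniform integrability.
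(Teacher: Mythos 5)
Your argument is the standard telescoping/integration-by-parts proof of the product formula, which is exactly what the paper intends: the authors omit the proof of this lemma, noting only that it follows by ``a standard argument (integration by parts).'' Your reduction to a single product $Y=Y_1A_1$, $Z=W_1B_1$ and your identification of the bracket --- killing every cross term carrying an increment of a bounded variation factor and keeping only $\sum A_1(t^N_{k-1})B_1(t^N_{k-1})\,\Delta Y_1\,\Delta W_1 \to \int_0^t A_1B_1\,d\langle Y_1,W_1\rangle$ --- correctly fills in the details the paper leaves out.
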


The above two lemmas are proved by a standard argument
(integration by parts)
and we omit the proof.
In the proof of Lemma~\ref{Difference of wong-zakai and euler 1},
we use estimates on the expectations of the integrals in the above sense.
We introduce a family of iterated integrals.
Let ${\cal S}$ be a set of stochastic processes which consists of
the processes
$
 g(Y(t))
$
where $g$ is a $C^1$ function with values in $\RR$
with bounded derivative and
\begin{align}
 Y=X^N, X^N_E, B, B^N, \Phi^N(t), \Phi^N_E(t).
\end{align}
We define a set ${\cal S}_i$ of two parameter processes
$f=f(s,t)$~$(0\le s\le t\le T)$
inductively.
Let ${\cal S}_0=\{1\}$.
The set $S_i$ ($i\ge 1$) consists of
finite sums of
\begin{align}
 \prod_{k=1}^jf_k(s,t),\qquad
\int_s^tg(s,u)df_0(u),\label{f in Si}
\end{align}
where
$
 f_k\in S_{i_k}~\sum_{k=1}^ji_k=i~~i_k\ge 1
$
and
$
f_0\in {\cal S}, g\in {\cal S}_{i-1}.
$
Inductively, we see that
$f=f(s,t)\in {\cal S}_i$ is equal to a finite sum
of $g(s)h(t)$, where $g,h\in {\mathbb S}$.
Therefore, the integral in (\ref{f in Si})
is meaningful.
For these random variables, we have the following estimate.

\begin{lem}\label{Moment estimate for f}
Let $t^N_{k-1}\le s\le t\le t^N_k$.
Let $p\ge 1$.
For any $f\in {\cal S}_i$~$(i\in {\mathbb N})$,
there exists $C_p>0$ which is independent of $N,k$ such that
\begin{equation}
\|\max_{s\le u\le v\le t }f(u,v)\|_{L^p}\le C(p)(t-s)^{i/2}.
\end{equation}
\end{lem}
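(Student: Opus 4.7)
The plan is to proceed by induction on $i$. The base case $i=0$ is trivial since $\mathcal{S}_0=\{1\}$ and $(t-s)^0=1$. For the induction step, any $f\in\mathcal{S}_i$ decomposes as a finite sum of terms of two types: (a)~products $\prod_{k=1}^j f_k(s,t)$ with $f_k\in\mathcal{S}_{i_k}$ and $\sum_k i_k=i$, or (b)~integrals $\int_s^t g(s,u)\,df_0(u)$ with $g\in\mathcal{S}_{i-1}$ and $f_0\in\mathcal{S}$. It suffices to bound each summand.

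For type (a), the pointwise estimate $\max_{u\le v}\prod_k|f_k(u,v)|\le\prod_k\max_{u\le v}|f_k(u,v)|$ combined with the generalized H\"older inequality (at exponents $jp$ per factor) and the induction hypothesis applied to each $f_k$ yields the desired bound $C(p)(t-s)^{\sum_k i_k/2}=C(p)(t-s)^{i/2}$.

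For type (b), the crucial input is that on a single partition interval containing $[s,t]$, each admissible process $Y$ (and hence $f_0=\phi(Y)$ with $\phi\in C^1_b$) satisfies $\|Y(v)-Y(u)\|_{L^p}\le C_p(v-u)^{1/2}$ for all $u\le v$ in that interval, and writing its semimartingale decomposition $Y=M+A$, we also have $\|\langle M\rangle_v-\langle M\rangle_u\|_{L^p}\le C_p(v-u)$ and $\bigl\|\|A\|_{[u,v]}\bigr\|_{L^p}\le C_p(v-u)^{1/2}$. For $Y=B$ this is classical; for $B^N$ it follows from $B^N(v)-B^N(u)=(v-u)\Delta B_k/\Delta$ combined with $\|\Delta B_k\|_{L^p}=O(\sqrt{\Delta_N})$ and $v-u\le\Delta_N$; and for $\Phi^N,X^N,X^N_E,\Phi^N_E$ these estimates come from Lemmas~\ref{Estimate for YN}, \ref{Moment estimate for XN}, and \ref{Moment estimate for XNE}. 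The bounded-variation contribution to the integral then obeys
$$\Bigl\|\max_{s\le u\le v\le t}\Bigl|\int_u^v g(u,r)\,dA(r)\Bigr|\Bigr\|_{L^p}\le\Bigl\|\max_{u\le v}|g(u,v)|\Bigr\|_{L^{2p}}\cdot\bigl\|\|A\|_{[s,t]}\bigr\|_{L^{2p}}\le C(p)(t-s)^{(i-1)/2}\cdot C(p)(t-s)^{1/2}$$
by Cauchy--Schwarz in $L^p$ combined with the induction hypothesis on $g$ and the total-variation estimate on $A$, matching the desired order $(t-s)^{i/2}$.

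The martingale contribution $\int_u^v g(u,r)\,dM(r)$ is the main obstacle, because the dependence of the integrand on the lower endpoint $u$ rules out a direct application of BDG or Doob to the $(u,v)$-supremum. To overcome this I would invoke the extended integration-by-parts rule of Lemma~\ref{Extended Ito formula} on the inner integral to obtain
$$\int_u^v g(u,r)\,dM(r)=g(u,v)(M(v)-M(u))-\int_u^v(M(r)-M(u))\,d_r g(u,r)-\langle g(u,\cdot),M\rangle_{[u,v]},$$
which is legitimate because, by the inductive structural description stated before the lemma, the process $g(u,\cdot)$ belongs to the class $\mathbb{S}$ for each fixed $u$, so $d_r g(u,r)$ and the bracket $\langle g(u,\cdot),M\rangle$ are well defined. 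Each resulting piece factorizes into a supremum of $g$ (bounded inductively by $(t-s)^{(i-1)/2}$) times either an increment of $M$, a quadratic-variation bracket, or a total-variation quantity of $g(u,\cdot)$, each of order $(t-s)^{1/2}$ by the estimates above; a further Cauchy--Schwarz in $L^p$ then delivers the $C(p)(t-s)^{i/2}$ bound and closes the induction.
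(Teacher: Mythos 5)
Your overall skeleton (induction on $i$, generalized H\"older for the product terms, the bound $|\int_u^v g(u,r)\,dA(r)|\le\max|g|\cdot\|A\|_{[s,t]}$ for bounded-variation integrators, and an integration by parts to deal with non-adapted integrands against martingales) is close in spirit to the paper's argument, and your treatment of types (a) and of the bounded-variation contribution is fine. The gap is in the martingale step. After the integration by parts you must control $\int_u^v(M(r)-M(u))\,d_rg(u,r)$ and $\langle g(u,\cdot),M\rangle_{[u,v]}$, and you propose to bound these by ``a total-variation quantity of $g(u,\cdot)$'' and a bracket ``of order $(t-s)^{1/2}$.'' Neither is available: a generic $g\in{\cal S}_{i-1}$ is, for fixed $u$, a genuine semimartingale in ${\mathbb S}$ whose martingale part is nontrivial whenever $B$ or $X^N_E$ has appeared as an integrator at a lower level, so $\|g(u,\cdot)\|_{[u,v]}=+\infty$ in general; and the bracket term requires a bound on $\langle g(u,\cdot)\rangle_{[u,v]}$ of order $(t-s)^{i-1}$, which does \emph{not} follow from the inductive bound on $\max|g(u,v)|$ (smallness of the supremum of a semimartingale controls neither its total variation nor its quadratic variation --- think of $\epsilon\sin(B(r)/\epsilon)$). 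A single integration by parts also does not terminate the problem: the martingale part of $d_rg(u,r)$ is itself of the form $A(r)\,dM_Y(r)$ with $A$ non-adapted, so you are back to an integral of the same type.

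What closes this loop in the paper is a structural normal form proved \emph{before} taking any expectations: by induction one shows that every $f\in{\cal S}_i$ is a finite sum of terms $\bigl(\prod_{k=1}^l\int_s^tg_k(s,u)\,dA^k(u)\bigr)\,h(s,t)$ in which every non-adapted factor $A^k$ is of bounded variation (namely built from $X^N$, $B^N$, $\Phi^N$, $\Phi^N_E$) and $h\in{\cal S}_j$ is \emph{adapted}, with $\sum_k(i_k+1)+j=i$. In other words, all genuinely stochastic integrals are swept into the adapted factor $h$, where Burkholder--Davis--Gundy and Doob apply directly to the two-parameter supremum, while every non-adapted integrand only ever meets a bounded-variation integrator, handled by Lemma~\ref{Estimate for YN} and Lemma~\ref{estimate for xi} together with Lemmas~\ref{Moment estimate for XN} and \ref{Moment estimate for XNE}. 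You need to strengthen your induction to carry this structural information (not merely the $L^p$ bound on the supremum); with only the moment bound as inductive hypothesis, the estimates you invoke for the two problematic pieces cannot be justified.
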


\begin{proof}
In this proof, we say that
 $f\in \cup_{i\ge 0}{\cal S}_i$
is adapted when the following holds.
The definition is given inductively by
\begin{itemize}
 \item[(i)] $1\in {\cal S}_0$ is adapted, 
\item[(ii)] Let $f$ be finite linear sums of processes
in (\ref{f in Si}).
Then $f$ is adapted if
all $f_k$~$(1\le k\le j)$ and $g$ are adapted
and $f_0=g(Y(t))$,
where $Y=X^N_E, B, \Phi^N_E$ and
$g$ is a $C^1$ function with bounded derivative.
\end{itemize}
By an induction on $i$, it is easy to check that
the set ${\cal S}_i$ 
is equal to the set of 
finite sums of two parameter processes
\begin{align}
\left(\prod_{k=1}^l\int_s^tg_k(s,u)dA^k(u)\right) \cdot
h(s,t),
\end{align}
where
\begin{itemize}
\item[(a)] $A^k$ is a bounded variation process in ${\cal S}$
and $g_k\in {\cal S}_{i_k}$.
When $l=0$, we set this product term as $1$.
\item[(b)] $h\in {\cal S}_{j}$ is adapted.
\item[(c)] the indices $i_k,j$ satisfy
$\sum_{k=1}^l(i_k+1)+j=i$.
\end{itemize}
Using this fact,
Lemma~\ref{Moment estimate for XNE},
Lemma~\ref{Moment estimate for XN},
Lemma~\ref{Estimate for YN} (\ref{estimate for YN})
and Lemma~\ref{estimate for xi},
we can complete the proof of the desired result by
an induction on $i$.
\end{proof}

\begin{proof}[Proof of Lemma~$\ref{Difference of wong-zakai and euler 1}$]
We write
\begin{align*}
Z^N(t)&=X^N_E(t)-X^N(t),\\
\rho_N(t)&=e^{-\frac{2}{\gamma}(f(X^N_E(t)+f(X^N(t))))},\\
 m_N(t)&=\rho_N(t)|Z^N(t)|^2.
\end{align*}
Let $t_{k-1}\le t\le t_k$.
By Lemma~\ref{Extended Ito formula},
\begin{align}
\lefteqn{d m_N(t)}\nonumber\\
&=
\rho_N(t)\Biggl\{
2\left(Z^N(t),\sigma(X^N_E(t_{k-1}))dB(t)-\sigma(X^N(t))
\frac{\Delta B_k}{\Delta}dt\right)\nonumber\\
& \quad +
2\left(Z^N(t),\tilde{b}(X^N_E(t_{k-1}))-b(X^N(t))\right)dt+
\tr\left[({}^t\sigma\sigma)(X^N_E(t_{k-1}))\right]dt\Biggr\}\nonumber\\
&\quad +
2\rho_N(t)\left(Z^N(t), d\Phi^N_E(t)-d\Phi^N(t)\right)\nonumber\\
&\quad
-\frac{2\rho_N(t)}{\gamma}
\left|Z^N(t)\right|^2\Bigl\{
\left((D f)(X^N_E(t)),d\Phi^N_E(t)\right)
+
\left((D f)(X^N(t)),d\Phi^N(t)\right)
\Bigr\}\nonumber\\
&\quad-\frac{2\rho_N(t)}{\gamma}
\left|Z^N(t)\right|^2\Bigl\{
\left((D f)(X^N_E(t)),\sigma(X^N_E(t_{k-1}))dB(t)\right)\nonumber\\
& \quad\qquad\qquad\qquad\qquad\qquad\qquad+
\left((D f)(X^N(t)),\sigma(X^N(t))\frac{\Delta B_k}{\Delta}dt\right)
\Bigr\}\nonumber\\
& \quad -\frac{4\rho_N(t)}{\gamma}
\sum_i\left((D f)(X^N_E(t)),\sigma(X^N_E(t_{k-1}))e_i\right)
\left(Z^N(t),\sigma(X^N_E(t_{k-1}))e_i\right)dt\nonumber\\
& \quad -\frac{2\rho_N(t)}{\gamma}|Z^N(t)|^2
\left\{\left((D f)(X^N_E(t)),\tilde{b}(X^N_E(t_{k-1}))\right)dt+
\left((D f)(X^N(t)),{b}(X^N(t))\right)dt
\right\}\nonumber\\
& \quad -\frac{\rho_N(t)}{\gamma}|Z^N(t)|^2\tr
 (D^2f)(X^N_E(t))
\left[
\sigma(X^N_E(t_{k-1})\cdot,\sigma(X^N_E(t_{k-1}))\cdot
\right]dt\nonumber\\
& \quad +\frac{2\rho_N(t)}{\gamma^2}|Z^N(t)|^2
|(D f)(X^N_E(t))(\sigma(X^N_E(t_{k-1})))|^2dt,
\end{align}
where $\{e_i\}$ is a c.o.n.s of
$\RR^n$.
After integrating both sides from $t_{k-1}$ to $t_k$, we
see that
the sum of the integral of the second term and the third term
on the RHS is non-positive by
the condition (C),
Therefore
\begin{align}
 m_N(t_k)&\le m_N(t_{k-1})+\sum_{k=1}^6I_k,
\end{align}
where
\begin{align*}
 I_1&=\int_{t_{k-1}}^{t_k}
\rho_N(t)\Biggl\{
2\left(Z^N(t),\sigma(X^N_E(t_{k-1}))dB(t)-\sigma(X^N(t))
\frac{\Delta B_k}{\Delta}dt\right)\nonumber\\
& \quad +
2\left(Z^N(t),\tilde{b}(X^N_E(t_{k-1}))-b(X^N(t))\right)dt
+\tr\left(({}^t\sigma\sigma)(X^N_E(t_{k-1}))\right)dt\Biggr\}
\end{align*}
\begin{align*}
 I_2&=-\int_{t_{k-1}}^{t_k}\frac{4\rho_N(t)}{\gamma}
\sum_i\left((D f)(X^N_E(t)),\sigma(X^N_E(t_{k-1}))e_i\right)
\left(Z^N(t),\sigma(X^N_E(t_{k-1}))e_i\right)dt
\end{align*}
\begin{align*}
 I_3&=
- \int_{t_{k-1}}^{t_k}\frac{2}{\gamma}m_N(t)
\Bigl\{
\left((D f)(X^N_E(t)),\sigma(X^N_E(t_{k-1}))dB(t)\right)\nonumber\\
& \quad\qquad\qquad\qquad\qquad\qquad\qquad+
\left((D f)(X^N(t)),\sigma(X^N(t))\frac{\Delta B_k}{\Delta}dt\right)
\Bigr\}
\end{align*}
\begin{align*}
 I_4&=
-\int_{t_{k-1}}^{t_k}\frac{2}{\gamma}m_N(t)
\left\{\left((D f)(X^N_E(t)),\tilde{b}(X^N_E(t_{k-1}))\right)dt+
\left((D f)(X^N(t)),b(X^N(t))\right)dt
\right\}
\end{align*}
\begin{align*}
 I_5&=
-\int_{t_{k-1}}^{t_k}\frac{m_N(t)}{\gamma}\tr
 (D^2f)(X^N_E(t))
\left[\sigma(X^N_E(t_{k-1})\cdot,\sigma(X^N_E(t_{k-1}))\cdot\right]dt
\end{align*}
\begin{align*}
 I_6&=\int_{t_{k-1}}^{t_k}\frac{2m_N(t)}{\gamma^2}
|(D f)(X^N_E(t))(\sigma(X^N_E(t_{k-1})))|^2dt.
\end{align*}
Let
$a_k=E[m_N(t_k)]$.
We prove that there exists a positive constant $C$ and $0<\theta<1$
which is independent
of $N$ and a non-negative sequence $\{b_k\}$ such that
\begin{align*}
 a_{k}&\le \left(1+\frac{CT}{N}\right)a_{k-1}+b_k\qquad 1\le k\le N\\
\sum_{k=1}^Nb_k&\le C\left(\frac{T}{N}\right)^{\theta/2}.
\end{align*}
Then we get
\begin{align*}
 a_k&\le
 \left(1+\frac{CT}{N}\right)^2a_{k-2}+\left(1+\frac{CT}{N}\right)b_{k-1}+b_k
\nonumber\\
&\le \left(1+\frac{CT}{N}\right)^ka_0+\sum_{i=0}^{k-1}
\left(1+\frac{CT}{N}\right)^ib_{k-i}\nonumber\\
&\le e^{CT}\sum_{i=1}^kb_i\le C_T\left(\frac{T}{N}\right)^{\theta/2}
\end{align*}
which is the desired estimate.
We consider
$I_k$ $(k=4,5,6)$.
By Lemma~\ref{Moment estimate for f},
we have 
$$
\|m_N(t)-m_N(t_{k-1})\|_{L^p}\le  C\Delta^{1/2}
\qquad t_{k-1}\le t\le t_k.
$$
Thus
\begin{align*}
 |E[I_k]|&\le
C\left(a_{k-1}\Delta+\Delta^{3/2}\right).
\end{align*}
So our task is to estimate $I_1,I_2,I_3$.
We consider $I_1$.
We rewrite
\begin{align*}
 I_1&=
J_1+J_2+J_3+J_4,
\end{align*}
where
\begin{align*}
 J_1&=
2\int_{t_{k-1}}^{t_k}
\rho_N(t_{k-1})
\Biggl\{
\left(Z^N(t),\sigma(X^N_E(t_{k-1}))dB(t)-\sigma(X^N(t_{k-1}))
\frac{\Delta B_k}{\Delta}dt\right)\nonumber\\
& \quad +
\frac{1}{2}\tr\left({}^t\sigma\sigma\right)(X^N_E(t_{k-1}))dt
\Biggr\}
\end{align*}
\begin{align*}
J_2&= 
2\int_{t_{k-1}}^{t_k}
\rho_N(t)\Bigl(Z^N(t),
\tilde{b}(X^N_E(t_{k-1}))-b(X^N(t))
-\left(\sigma(X^N(t))-\sigma(X^N(t_{k-1}))\right)
\frac{\Delta B_k}{\Delta}
\Bigr)dt,
\end{align*}
\begin{align*}
 J_3&=
2\int_{t_{k-1}}^{t_k}
(\rho_N(t)-\rho_N(t_{k-1}))
\left(Z^N(t),\sigma(X^N_E(t_{k-1}))dB(t)-\sigma(X^N(t_{k-1}))
\frac{\Delta B_k}{\Delta}dt\right),
\end{align*}
\begin{align*}
J_4&=
\int_{t_{k-1}}^{t_k}
\left(\rho_N(t)-\rho_N(t_{k-1})\right)
\tr\left({}^t\sigma\sigma\right)(X^N_E(t_{k-1}))dt.
\end{align*}
First we estimate $J_1$.
Let
\begin{align*}
 \tilde{J}_1&=
2\int_{t_{k-1}}^{t_k}
\rho_N(t_{k-1})
\Biggl\{
\left(Z^N(t)-Z^N(t_{k-1}),\sigma(X^N_E(t_{k-1}))dB(t)-\sigma(X^N(t_{k-1}))
\frac{\Delta B_k}{\Delta}dt\right)\nonumber\\
& \quad\quad +
\frac{1}{2}\tr\left({}^t\sigma\sigma\right)(X^N_E(t_{k-1}))dt
\Biggr\}.
\end{align*}
Then $E[J_1-\tilde{J}_1]=0$.
So it suffices to estimate the expectation of
$\tilde{J}_1$.
We rewrite 
\begin{align*}
 \tilde{J}_1&=
\sum_{k=1}^4\tilde{J}_{1,k},
\end{align*}
where
\begin{align*}
\tilde{J}_{1,1}&=
2\int_{t_{k-1}}^{t_k}
\rho_N(t_{k-1})
\Biggl\{
\Bigl(
\sigma(X^N_E(t_{k-1}))(B(t)-B(t_{k-1}))-\sigma(X^N(t_{k-1}))
\frac{\Delta B_k}{\Delta}(t-t_{k-1}),\nonumber\\
&\quad \quad\quad \sigma(X^N_E(t_{k-1}))dB(t)-\sigma(X^N(t_{k-1}))
\frac{\Delta B_k}{\Delta}dt\Bigr)\nonumber\\
& \quad\quad\quad +
\frac{1}{2}\tr\left({}^t\sigma\sigma\right)(X^N_E(t_{k-1}))dt
\Biggr\}.
\end{align*}
\begin{align*}
 \tilde{J}_{1,2}&=
-2\int_{t_{k-1}}^{t_k}
\rho_N(t_{k-1})
\Biggl\{
\Bigl(
\int_{t_{k-1}}^t\left(\sigma(X^N(s))-\sigma\left(X^N(t_{k-1})\right)\right)
\frac{\Delta B_k}{\Delta}ds,\nonumber\\
&\quad \sigma(X^N_E(t_{k-1}))dB(t)-\sigma(X^N(t_{k-1}))
\frac{\Delta B_k}{\Delta}dt\Bigr)\Biggr\}
\end{align*}
\begin{align*}
 \tilde{J}_{1,3}&=
2\int_{t_{k-1}}^{t_k}
\rho_N(t_{k-1})
\Biggl\{
\Bigl(\tilde{b}(X^N_E(t_{k-1})))(t-t_{k-1})-
\int_{t_{k-1}}^tb(X^N(s))ds,\nonumber\\
&\quad\quad\quad \sigma(X^N_E(t_{k-1}))dB(t)-\sigma(X^N(t_{k-1}))
\frac{\Delta B_k}{\Delta}dt\Bigr)\Biggr\}
\end{align*}
\begin{align*}
 \tilde{J}_{1,4}&=
2\int_{t_{k-1}}^{t_k}
\rho_N(t_{k-1})
\Biggl\{
\Bigl((\Phi^N_E(t)-\Phi^N_E(t_{k-1}))-
(\Phi^N(t)-\Phi^N(t_{k-1})),\nonumber\\
&\quad\quad \sigma(X^N_E(t_{k-1}))dB(t)-\sigma(X^N(t_{k-1}))
\frac{\Delta B_k}{\Delta}dt\Bigr)\Biggr\}\nonumber\\
\end{align*}
By a simple calculation,
\begin{align*}
 E[\tilde{J}_{1,1}]=
E\left[\rho_N(t_{k-1})
\|\sigma(X^N_E(t_{k-1}))-\sigma(X^N(t_{k-1}))\|_{H.S.}^2\right]
(t_k-t_{k-1})
\le C a_k\, \Delta.
\end{align*}
By Lemma~\ref{Moment estimate for f},
we have
\begin{align*}
 E[|\tilde{J}_{1,2}|]&\le
C\Delta^{3/2}.
\end{align*}
It is easy to see that
$E[|\tilde{J}_{1,3}|]\le C \Delta^{3/2}$.
By integrating by parts
\begin{align*}
|E[\tilde{J}_{1,4}]|&\le
C E\left[
\left(\|\Phi^N_E\|_{[t_{k-1},t_k]}+\|\Phi^N\|_{[t_{k-1},t_k]}\right)
\|B\|_{\infty,[t_{k-1},t_k]}\right]=:c_k.
\end{align*}
We have
\begin{align*}
 \sum_{k=1}^Nc_k&\le
C \|\left(\|\Phi^N_E\|_{[0,T]}+\|\Phi^N\|_{[0,T]}\right)\|_{L^2}
\|\max_k \|B\|_{\infty,[t_{k-1},t_k]}\|_{L^2}
\le C \Delta^{\theta/2}.
\end{align*}
We estimate $J_2$.
\begin{align*}
J_2&= 
2\int_{t_{k-1}}^{t_k}
\rho_N(t)\Bigl(Z^N(t),
\tilde{b}(X^N_E(t_{k-1}))-\tilde{b}(X^N(t_{k-1}))
+
b(X^N(t_{k-1}))
-b(X^N(t))\Bigr)dt\nonumber\\
&\quad+2\int_{t_{k-1}}^{t_k}
\rho_N(t)\Bigl(Z^N(t),
\tilde{b}(X^N(t_{k-1}))-b(X^N(t_{k-1}))\nonumber\\
& \qquad\qquad\qquad\qquad\quad
-\left(\sigma(X^N(t))-\sigma(X^N(t_{k-1}))\right)
\frac{\Delta B_k}{\Delta}
\Bigr)dt\nonumber\\
& =J_{2,1}+J_{2,2}.
\end{align*}
By rewriting $Z^N(t)=Z^N(t_{k-1})+Z^N(t)-Z^N(t_{k-1})$
and using Lemma~\ref{Moment estimate for f} and the Schwarz
inequality, we get
\begin{align*}
E[|J_{2,1}|]&\le
C(a_k\Delta+\Delta^{3/2}).
\end{align*}
We consider $J_{2,2}$.
Let
\begin{align*}
 \tilde{J}_{2,2}&=
2\int_{t_{k-1}}^{t_k}
\rho_N(t_{k-1})\Bigl(Z^N(t_{k-1}),
\tilde{b}(X^N(t_{k-1}))-b(X^N(t_{k-1}))\nonumber\\
& \qquad\qquad\qquad\qquad\quad
-\left(\sigma(X^N(t))-\sigma(X^N(t_{k-1}))\right)
\frac{\Delta B_k}{\Delta}
\Bigr)dt.
\end{align*}
By Lemma~\ref{Moment estimate for f},
we have
\begin{align*}
|E[J_{2,2}-\tilde{J}_{2,2}]|&\le
C\Delta^{3/2}.
\end{align*}
Noting
\begin{align*}
\lefteqn{
 \sigma(X^N(t))-\sigma(X^{N}(t_{k-1}))}\nonumber\\
&=\int_{t_{k-1}}^{t}(D\sigma)(X^N(s))
\left(\sigma(X^N(s))\frac{\Delta B_k}{\Delta}\right)ds+
\int_{t_{k-1}}^{t}(D\sigma)(X^N(s))(b(X^N(s)))ds\nonumber\\
& \quad+
\int_{t_{k-1}}^{t}(D\sigma)(X^N(s))(d\Phi^N(s))
\end{align*}
and for any $\xi$,
\begin{align*}
 E\left[\int_{t_{k-1}}^{t_k}(t-t_{k-1})(D\sigma)(\xi)
\left(\sigma(\xi)\frac{\Delta B_k}{\Delta}\right)
\left(\frac{\Delta B_k}{\Delta}\right)dt\right]&=
\frac{1}{2}\tr (D\sigma)(\xi)(\sigma(\xi))=(\tilde{b}-b)(\xi),
\end{align*}
we obtain
\begin{align*}
 |E[\tilde{J}_{2,2}]|&\le
CE\left[\|\Phi^N\|_{[t_{k-1},t_k]}\|B\|_{\infty,[t_{k-1},t_k]}\right]
+C\Delta^{3/2}.
\end{align*}
Therefore, as before, we get
$
|E[J_{2,2}]|\le
C\Delta^{3/2}+c_k,
$
where $c_k$ is a non-negative number such that
$\sum_{k=1}^Nc_k\le C\Delta^{\theta/2}$.
By Lemma~\ref{Moment estimate for f}, we have
$E[|J_4|]\le C\Delta^{3/2}$.
We estimate $J_3$ together with
$I_2$ and a term $I_{3,2}$ which is defined below.
We estimate $I_3$.
We rewrite
\begin{align*}
I_3&=I_{3,1}+I_{3,2}+I_{3,3}+I_{3,4}+I_{3,5}+I_{3,6}+I_{3,7},
\end{align*}
where
\begin{align*}
\lefteqn{I_{3,1}}\nonumber\\
&=-\frac{4}{\gamma}
\int_{t_{k-1}}^{t_k}
\rho_N(t)
\int_{t_{k-1}}^t\left(
Z^N(s)-Z^N(t_{k-1}),\sigma(X^N_E(t_{k-1}))dB(s)-\sigma(X^N(s))
\frac{\Delta B_k}{\Delta}ds
\right)\nonumber\\
& \times
\Biggl\{
\left((D f)(X^N_E(t)),\sigma(X^N_E(t_{k-1}))dB(t)\right)
+
\left((D f)(X^N(t)),\sigma(X^N(t))\frac{\Delta B_k}{\Delta}dt\right)
\Biggr\},
\end{align*}
\begin{align*}
I_{3,2}&=-\frac{4}{\gamma}
\int_{t_{k-1}}^{t_k}
\rho_N(t)
\int_{t_{k-1}}^t\left(
Z^N(t_{k-1}),\sigma(X^N_E(t_{k-1}))dB(s)-\sigma(X^N(s))
\frac{\Delta B_k}{\Delta}ds
\right)\nonumber\\
& \times
\Biggl\{
\left((D f)(X^N_E(t)),\sigma(X^N_E(t_{k-1}))dB(t)\right)
+
\left((D f)(X^N(t)),\sigma(X^N(t))\frac{\Delta B_k}{\Delta}dt\right)
\Biggr\},
\end{align*}
\begin{align*}
 I_{3,3}&=-\frac{4}{\gamma}\int_{t_{k-1}}^{t_k}
\rho_N(t)
\int_{t_{k-1}}^t\left(Z^N(s),
 \tilde{b}(X^N_E(t_{k-1}))-b(X^N(s))\right)ds
\nonumber\\
& \times
\Biggl\{
\left((D f)(X^N_E(t)),\sigma(X^N_E(t_{k-1}))dB(t)\right)
+
\left((D f)(X^N(t)),\sigma(X^N(t))\frac{\Delta B_k}{\Delta}dt\right)
\Biggr\},
\end{align*}
\begin{align*}
 I_{3,4}&=-\frac{4}{\gamma}\int_{t_{k-1}}^{t_k}
\rho_N(t)
\int_{t_{k-1}}^t\left(Z^N(s)-Z^N(t_{k-1}),
 d\Phi^N_E(s)-d\Phi^N(s)\right)
\nonumber\\
& \times
\Biggl\{
\left((D f)(X^N_E(t)),\sigma(X^N_E(t_{k-1}))dB(t)\right)
+
\left((D f)(X^N(t)),\sigma(X^N(t))\frac{\Delta B_k}{\Delta}dt\right)
\Biggr\},
\end{align*}
\begin{align*}
 I_{3,5}&=-\frac{4}{\gamma}\int_{t_{k-1}}^{t_k}
\rho_N(t)
\left(Z^N(t_{k-1}),
(\Phi^N_E(t)-\Phi^N_E(t_{k-1}))-
(\Phi^N(t)-\Phi^N(t_{k-1}))\right)
\nonumber\\
& \times
\Biggl\{
\left((D f)(X^N_E(t)),\sigma(X^N_E(t_{k-1}))dB(t)\right)
+
\left((D f)(X^N(t)),\sigma(X^N(t))\frac{\Delta B_k}{\Delta}dt\right)
\Biggr\},
\end{align*}
\begin{align*}
 I_{3,6}&=-\frac{2}{\gamma}
|Z^N(t_{k-1})|^2\int_{t_{k-1}}^{t_k}
\rho_N(t)
\Biggl\{
\left((D f)(X^N_E(t)),\sigma(X^N_E(t_{k-1}))dB(t)\right)\nonumber\\
&\quad +
\left((D f)(X^N(t)),\sigma(X^N(t))\frac{\Delta B_k}{\Delta}dt\right)
\Biggr\},\\
I_{3,7}&=-\frac{2}{\gamma}
\int_{t_{k-1}}^{t_k}
\rho_N(t)
(t-t_{k-1})\tr\left(({}^t\sigma\sigma)(X^N_E(t_{k-1}))\right)
\Biggl\{
\left((D f)(X^N_E(t)),\sigma(X^N_E(t_{k-1}))dB(t)\right)\nonumber\\
&\quad +
\left((D f)(X^N(t)),\sigma(X^N(t))\frac{\Delta B_k}{\Delta}dt\right)
\Biggr\}.
\end{align*}
As for $I_{3,1}, I_{3,3}, I_{3,4}, I_{3,7}$, by
Lemma~\ref{Moment estimate for f}, it is easy to
see
\begin{align*}
|E[I_{3,k}]|&\le 
C\Delta^{3/2}.
\end{align*}
We consider $I_{3,6}$.
By (\ref{estimate for YN}) and Lemma~\ref{estimate for xi},
we have 
$$
\|X^N\|_{[t_{k-1},t_k]}\le C\left(|\Delta B_k|+\Delta\right).
$$
Using this estimate and (\ref{estimate for xe}),
we have
\begin{multline*}
\Biggl|E\Biggl[
\int_{t_{k-1}}^{t_k}
\rho_N(t)
\Biggl\{
\left((D f)(X^N_E(t)),\sigma(X^N_E(t_{k-1}))dB(t)\right)\nonumber\\
+\left((D f)(X^N(t)),\sigma(X^N(t))\frac{\Delta B_k}{\Delta}dt\right)
\Biggr\} \Bigg |{\cal F}_{t_{k-1}}
\Biggr] 
\Biggr|\le
C\Delta.\phantom{lllllllllllllllllll}
\end{multline*}
Hence,
$|E[I_{3,6}]|\le C a_{k-1}\Delta$.
Using Lemma~\ref{Moment estimate for f},
we have there exists non-negative random variable $I'_{3,5}$ such that
$E[I'_{3,5}]\le C\Delta^{3/2}$ and
\begin{align*}
|I_{3,5}|&\le 
C|Z^N(t_{k-1})|G_k+I'_{3,5}\le
\frac{C}{2}\left(|Z^N(t_{k-1})|^2G_k+G_k\right)+I'_{3,5},
\end{align*}
where
\begin{align*}
 G_k&=\left(\max_{t_{k-1}\le t\le t_k}
\left|\int_{t_{k-1}}^t
\left((D f)(X^N_E(s)),\sigma(X^N_E(t_{k-1}))dB(s)\right)
\right|+\|B\|_{\infty,[t_{k-1},t_k]}\right)\nonumber\\
&\quad
\times \left(\|\Phi^N_E\|_{[t_{k-1},t_k]}+
\|\Phi^N\|_{[t_{k-1},t_k]}\right).
\end{align*}
Using $E[G_k | {\cal F}_{t_{k-1}}]\le C\Delta$,
we obtain
\begin{align*}
E[|I_{3,5}|] &\le
C(a_{k-1}\Delta+E\left[G_k\right])
+C\Delta^{3/2}.
\end{align*}
Since
\begin{align*}
\sum_{k=1}^NE[G_k]&\le
E\Bigg[
\left(\|\Phi^N_E\|_{[0,T]}+
\|\Phi^N\|_{[0,T]}\right)\nonumber\\
&\quad \times
\max_k
\left(\max_{t_{k-1}\le t\le t_k}
\left|\int_{t_{k-1}}^t
\left((D f)(X^N_E(s)),\sigma(X^N_E(t_{k-1}))dB(s)\right)
\right|+\|B\|_{\infty,[t_{k-1},t_k]}\right)
\Biggr]\nonumber\\
&\le C\Delta^{\theta/2},
\end{align*}
we obtain the desired estimate for $I_{3,5}$.
Finally we estimate $I_2+J_3+I_{3,2}$.
First we rewrite $J_3$.
Note that
\begin{align*}
\rho_N(t)-\rho_N(t_{k-1})
&=
-\frac{2}{\gamma}\rho_N(t_{k-1})
\Biggl\{\Bigl((D f)(X^N_E(t_{k-1})), 
\sigma(X^N_E(t_{k-1}))(B(t)-B(t_{k-1}))\Bigr)\nonumber\\
& \quad +
\left((D f)(X^N(t_{k-1})), 
\sigma(X^N(t_{k-1}))
\frac{\Delta B_k}{\Delta}(t-t_{k-1})\right)
\Biggr\}\nonumber\\
&\quad -\frac{2}{\gamma}\rho_N(t_{k-1})
\Biggl\{\Bigl((D f)(X^N_E(t_{k-1})), 
\tilde{b}(X^N_E(t_{k-1}))(t-t_{k-1})\Bigr)\nonumber\\
& \quad +
\left((D f)(X^N(t_{k-1})), 
b(X^N(t_{k-1}))(t-t_{k-1})\right)
\Biggr\}\nonumber\\
& -\frac{2}{\gamma}\rho_N(t_{k-1})
\Biggl\{\Bigl((D f)(X^N_E(t_{k-1})), 
\Phi^N_E(t)-\Phi^N_E(t_{k-1})\Bigr)\nonumber\\
& \quad +
\left((D f)(X^N(t_{k-1})), 
\Phi^N(t)-\Phi^N(t_{k-1})\right)
\Biggr\}+\tilde{\rho}(t_{k-1},t).
\end{align*}
Here $\tilde{\rho}\in {\cal S}_2$.
Hence we can neglect the term $\tilde{\rho}$
to estimate $J_3$ by Lemma~\ref{Moment estimate for f}.
Also we can estimate the terms containing
$\Phi^N_E, \Phi^N$ in a similar way
to $\tilde{J}_{1,4}, \tilde{J}_{2,2}, I_{3,5}$.
We can estimate the term containing $b, \tilde{b}$ 
by Lemma~\ref{Moment estimate for f}.
Consequently, we can replace the term
$J_3$ by $\tilde{J}_3$:
\begin{align*}
\tilde{J}_3&= 
-\frac{4}{\gamma}
\int_{t_{k-1}}^{t_k}\rho_N(t_{k-1})
\Biggl\{\Bigl((D f)(X^N_E(t_{k-1})), 
\sigma(X^N_E(t_{k-1}))(B(t)-B(t_{k-1}))\Bigr)\nonumber\\
& \quad +
\left((D f)(X^N(t_{k-1})), 
\sigma(X^N(t_{k-1}))
\frac{\Delta B_k}{\Delta}(t-t_{k-1})\right)
\Biggr\}\nonumber\\
&\quad \times \left(Z^N(t_{k-1}),\sigma(X^N_E(t_{k-1}))dB(t)-\sigma(X^N(t_{k-1}))
\frac{\Delta B_k}{\Delta}dt\right).
\end{align*}
Also,
similarly,
we can replace
$I_{3,2}$ by $\tilde{I}_{3,2}$:
\begin{align*}
 \tilde{I}_{3,2}&=
-\frac{4}{\gamma}
\int_{t_{k-1}}^{t_k}
\rho_N(t_{k-1})\nonumber\\
& \quad \times
\left(
Z^N(t_{k-1}),\sigma(X^N_E(t_{k-1}))(B(t)-B(t_{k-1}))-\sigma(X^N(t_{k-1}))
\frac{\Delta B_k}{\Delta}(t-t_{k-1})
\right)\nonumber\\
& \times
\Biggl\{
\left((D f)(X^N_E(t_{k-1})),\sigma(X^N_E(t_{k-1}))dB(t)\right)
+
\left((D f)(X^N(t_{k-1})),\sigma(X^N(t_{k-1}))\frac{\Delta B_k}{\Delta}dt\right)
\Biggr\}.
\end{align*}
By a simple calculation, we have
\begin{align*}
 \lefteqn{E\left[\tilde{J}_3~|~{\cal F}_{t_{k-1}}\right]}\nonumber\\
&=
\frac{2\Delta}{\gamma}\rho_N(t_{k-1})
\sum_{i}\left((D f)
(X^N_E(t_{k-1})),\sigma(X^N_E(t_{k-1}))e_i\right)
\left(Z^N(t_{k-1}),\sigma(X^N(t_{k-1}))e_i\right)\nonumber\\
&\quad -\frac{2\Delta}{\gamma}\rho_N(t_{k-1})
\sum_{i}\left((D f)
(X^N(t_{k-1})),\sigma(X^N(t_{k-1}))e_i\right)
\left(Z^N(t_{k-1}),\sigma(X^N_E(t_{k-1}))e_i\right)\nonumber\\
&\quad +\frac{2\Delta}{\gamma}\rho_N(t_{k-1})
\sum_{i}\left((D f)
(X^N(t_{k-1})),\sigma(X^N(t_{k-1}))e_i\right)
\left(Z^N(t_{k-1}),\sigma(X^N(t_{k-1}))e_i\right),
\end{align*}
\begin{align*}
 \lefteqn{E\left[\tilde{I}_{3,2}~|~{\cal F}_{t_{k-1}}\right]}\nonumber\\
&=
-\frac{2\Delta}{\gamma}\rho_N(t_{k-1})\sum_i
\left(Z^N(t_{k-1}),\sigma(X^N_E(t_{k-1}))e_i\right)
\left((D f)(X^N(t_{k-1})),\sigma(X^N(t_{k-1}))e_i\right)\nonumber\\
&\quad +\frac{2\Delta}{\gamma}\rho_N(t_{k-1})\sum_i
\left(Z^N(t_{k-1}),\sigma(X^N(t_{k-1}))e_i\right)
\left((D
 f)(X^N_E(t_{k-1})),\sigma(X^N_E(t_{k-1}))e_i\right)\nonumber\\
&\quad +\frac{2\Delta}{\gamma}\rho_N(t_{k-1})\sum_i
\left(Z^N(t_{k-1}),\sigma(X^N(t_{k-1}))e_i\right)
\left((D
 f)(X^N(t_{k-1})),\sigma(X^N(t_{k-1}))e_i\right).
\end{align*}
Hence
\begin{align*}
\lefteqn{E\left[\tilde{J}_3+\tilde{I}_{3,2}\right]}\nonumber\\
&=Ca_{k-1}\Delta\nonumber\\
&\quad+\frac{4\Delta}{\gamma}E\left[
\rho_N(t_{k-1})
\sum_i
\left(Z^N(t_{k-1}),\sigma(X^N(t_{k-1}))e_i\right)
\left((D
 f)(X^N(t_{k-1})),\sigma(X^N(t_{k-1}))e_i\right)
\right].
\end{align*}
Consequently,
\begin{align*}
\left|E\left[I_2+\tilde{J}_3+\tilde{I}_{3,2}\right]\right|&\le
C\left(a_{k-1}\Delta+\Delta^{3/2}\right).
\end{align*}
This completes the proof.
\end{proof}

\begin{rem}{\rm
 Note that some parts in the above estimate for $a_k=E[m_n(t_k)]$ are rough.
In the case where $\partial D=\emptyset$, that is, 
$D=\RR^d$, the local time terms
vanish.
In this case, the estimate
\begin{align}
 a_k&\le (1+C\Delta)a_{k-1}+C \Delta^2
\end{align}
might be true.
The bad term $\Delta^{\theta/2}$ essentially 
comes from the estimates on local time terms if
$\partial D\ne \emptyset$.}
\end{rem}

\begin{lem}\label{Difference of wong-zakai and euler 2}
Assume the same assumptions in Lemma~$\ref{Difference of wong-zakai and
euler 1}$ and consider the
same SDE. Let $0<\theta<1$.
Then there exists a positive constant $C_{p,T,\theta}$ such that
\begin{align}
 E\left[\max_{0\le t\le T}|X^N(t)-X^N_E(t)|^{2p}\right]\le
C_{p,T,\theta}\Delta_N^{\theta/6}.
\end{align}
\end{lem}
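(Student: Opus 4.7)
The plan is to bootstrap Lemma~\ref{Difference of wong-zakai and euler 1}, which controls $E[|Z^N(t^N_k)|^2]$ at the Euler--Peano grid points (writing $Z^N = X^N - X^N_E$), into the $L^{2p}$ estimate on the time supremum stated here. The argument will have three ingredients: extending the pointwise $L^2$ bound from grid points to arbitrary times, upgrading from $L^2$ to $L^{2p}$ via H\"older interpolation against the uniform higher-moment bounds, and passing from pointwise to supremum via an auxiliary coarse-grid discretization whose scale is chosen to balance two competing error terms.

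First I would observe that for any $t\in[t^N_{k-1}, t^N_k]$, the decomposition $Z^N(t)-Z^N(t^N_{k-1}) = (X^N(t)-X^N(t^N_{k-1})) - (X^N_E(t)-X^N_E(t^N_{k-1}))$ combined with Lemmas~\ref{Moment estimate for XNE} and \ref{Moment estimate for XN} bounds each increment in $L^2$ by $C\Delta_N^{1/2}$; together with Lemma~\ref{Difference of wong-zakai and euler 1} this yields $\sup_{0\le t\le T} E[|Z^N(t)|^2] \le C\Delta_N^{\theta/2}$. Those same moment lemmas provide the uniform bound $\sup_{N,t} E[|Z^N(t)|^{2M}] \le C_M$ for every $M\ge 1$; log-convexity of $L^q$-norms (Lyapunov interpolation) then gives $\sup_{0\le t\le T} E[|Z^N(t)|^{2p}] \le C_{p,\theta}\Delta_N^{\theta/2}$, the small interpolation loss being absorbed by slightly enlarging the internal $\theta$ (the lemma is to be proved for every $\theta<1$).

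For the third step I would set $\delta = \Delta_N^{\theta/3}$, introduce the coarse grid $s_j = j\delta$ for $0\le j\le \lceil T/\delta\rceil$, and write
\[
\max_{0\le t\le T}|Z^N(t)|^{2p}\le 2^{2p-1}\Bigl(\max_j |Z^N(s_j)|^{2p}+\max_j \sup_{t\in[s_j,s_{j+1}]}|Z^N(t)-Z^N(s_j)|^{2p}\Bigr).
\]
A union bound on the first maximum combined with the pointwise $L^{2p}$ estimate above gives $E[\max_j |Z^N(s_j)|^{2p}] \le (T/\delta)\cdot C\Delta_N^{\theta/2} = CT\Delta_N^{\theta/6}$. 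For the second maximum, setting $W_j = \sup_{t\in[s_j,s_{j+1}]}|Z^N(t)-Z^N(s_j)|$ and noting that $E[W_j^{2pq}]\le C\delta^{pq}$ by the same moment lemmas, Jensen's inequality applied to the concave map $x\mapsto x^{1/q}$ yields $E[\max_j W_j^{2p}]\le (\sum_j E[W_j^{2pq}])^{1/q} \le CT^{1/q}\delta^{p-1/q}$, which for $q$ large enough that $p-1/q\ge 1/2$ reduces to $C\delta^{1/2} = C\Delta_N^{\theta/6}$.

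The hard part will be recognizing why the seemingly-wasteful discretization scale $\delta = \Delta_N^{\theta/3}$ is forced on us: the pointwise rate $\Delta_N^{\theta/2}$ at a single time is too weak to survive a union bound over the native grid of $N=T/\Delta_N$ points, since that would produce $CT\Delta_N^{\theta/2-1}\to\infty$, so one must discretize on a strictly coarser scale. The exponent $\theta/3$ is precisely the unique one that makes the coarse-grid union bound and the within-block supremum both contribute exactly $\Delta_N^{\theta/6}$, producing the factor $1/3$ loss relative to Lemma~\ref{Difference of wong-zakai and euler 1} and accounting for the authors' remark that the order is far from best.
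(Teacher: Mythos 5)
Your proof is correct and follows essentially the same route as the paper: both arguments introduce a coarse time grid whose scale is tuned so that the pointwise rate $\Delta_N^{\theta/2}$ from Lemma~\ref{Difference of wong-zakai and euler 1} (upgraded from $L^2$ to $L^{2p}$ by interpolation against the uniform moment bounds) balances against the within-block oscillation of $X^N$ and $X^N_E$, yielding the exponent $\theta/6$. The only differences are in execution: the paper controls the maximum over the coarse grid by the sum of the random variables themselves (hence takes only $N_0\sim N^{1/(6p)}$ grid points and invokes the H\"older-norm estimate of Lemma~\ref{Garsia} for the oscillation term), whereas you use a direct union bound on $2p$-th moments together with a Jensen argument for the block oscillations, which permits the $p$-independent scale $\delta=\Delta_N^{\theta/3}$ --- both bookkeepings land on the same rate.
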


\begin{proof}
Let $N_0$ be a natural number and 
choose a sufficiently large natural number $N$.
Pick partition points
$\{s_k\}_{k=1}^{N_0}\subset \{t^N_k\}_{k=1}^N$ such that
\begin{align*}
 |s_k-t^{N_0}_k|\le \frac{T}{N}.
\end{align*}
Let $t^{N_0}_{k-1}\le t\le t^{N_0}_k$.
Then
\begin{align*}
 |X^N(t)-X^N_E(t)|&\le
|X^N(t)-X^N(t^{N_0}_k)|+|X^N(t^{N_0}_{k})-X^N(s_k)|+
|X^N(s_k)-X^N_E(s_k)|\nonumber\\
&\quad +
|X^N_E(s_k)-X^N_E(t^{N_0}_k)|+|X^N_E(t^{N_0}_k)-X^N_E(t)|
\end{align*}
and
\begin{align*}
 \max_{0\le t\le T}|X^N(t)-X^N_E(t)|&\le
2\max_{0\le s\le t\le T, |t-s|\le T/N_0}|X^N(t)-X^N(s)|\nonumber\\
&\quad + 2\max_{0\le s\le t\le T, |t-s|\le T/N_0} |X^N_E(t)-X^N_E(s)|\nonumber\\
&\quad +\sum_{k=1}^{N_0}|X^N(s_k)-X^N_E(s_k)|.
\end{align*}
Therefore
\begin{align*}
 E\left[\max_{0\le t\le T}|X^N(t)-X^N_E(t)|^{2p}\right]&
\le C_{p,\theta}3^{2p-1}2^{2p}\left(\frac{T}{N_0}\right)^{p\theta}+
N_0^{2p-1}\sum_{k=1}^{N_0}E\left[|X^N(s_k)-X^N_E(s_k)|^{2p}\right]\nonumber\\
& \le C_{p,\theta}\left(3^{2p-1}2^{2p+1}\left(\frac{T}{N_0}\right)^{p\theta}+
N_0^{2p}\left(\frac{T}{N}\right)^{\theta/2}\right).
\end{align*}
Here we use the uniform moment estimate for $X^N_E, X^N$ and
Lemma~\ref{Difference of wong-zakai and euler 1} and
Lemma~\ref{Garsia}.
Hence setting $N_0$ as the integer part of $N^{1/6p}$,
we obtain the desired estimate.
\end{proof}

\begin{proof}[Proof of main theorem]
 The proof follows from 
Theorem~\ref{Euler-Peano approximation} and
Lemma~\ref{Difference of wong-zakai and euler 2}.
\end{proof}

\noindent
{\bf Acknowledgement}

\noindent
The authors would like to thank the referee for the 
valuable comments and suggestions which improve the 
quality of the paper.

\end{document}